\newcommand{\inv}{\operatorname{inv}}
\title{Mallows permutations as stable matchings}
\date{4 June 2018}
\author{Omer Angel}
\author{Alexander E.\ Holroyd}
\author{Tom Hutchcroft}
\author{Avi Levy}
\address{Omer Angel, Department of Mathematics, University of British Columbia}
\email{angel@math.ubc.ca}
\address{Alexander E.\ Holroyd}
\email{holroyd@uw.edu}
\address{Tom Hutchcroft, Statslab, DPMMS, University of Cambridge}
\email{t.hutchcroft@maths.cam.ac.uk}
\address{Avi Levy, Microsoft Corporation, Redmond, WA 98052}
\email{avi.levy@microsoft.com}
\renewcommand{\P}{\mathbb P}
\newcommand{\E}{\mathbb E}
\newcommand{\Z}{\mathbb Z}
\newcommand{\cF}{\mathcal F}
\newcommand{\sA}{\mathscr A}
\def\nc{\newcommand}
\nc\Mal{\operatorname{Mal}}
\nc\MalPref{\operatorname{MalPref}}
\nc\sgn{\operatorname{sgn}}
\nc\matched{\operatorname{Matched}}
\nc\unmatched{\operatorname{Unmatched}}
\nc\match{\operatorname{Match}}
\nc\PrefStruc{\operatorname{PrefStruc}}
\nc\dto[1]{\xrightarrow[#1]{d}}
\nc\Var{\operatorname{Var}}
\nc\Cov{\operatorname{Cov}}
\nc\lat{\operatorname{Lat}}
\nc\elat{\operatorname{ELat}}
\nc\Res{\operatorname{Res}}
\nc\Fix{\operatorname{Fix}}
\nc\stab{\operatorname{StabMatch}}
\nc\Nbhd{\operatorname{Nbhd}}
\nc\Bnd{\operatorname{Bnd}}
\nc\gr{\operatorname{gr}}
\nc\id{\operatorname{id}}
\newcommand{\flow}{\operatorname{Fl}}
\newcommand{\eps}{\varepsilon}
\nc\malefemale{\mathrel{\ooalign{$\male$\cr\hidewidth\raise-.25ex\hbox{$\female\mkern4.5mu$}\cr}}}
  \theoremstyle{plain}
    \newtheorem{thm}{Theorem}
    \newtheorem*{thm*}{Theorem}
    \newtheorem{prop}[thm]{Proposition}
    \newtheorem{lemma}[thm]{Lemma}
    \newtheorem{cor}[thm]{Corollary}
  \theoremstyle{definition}
  \theoremstyle{remark}
    \newtheorem*{remark}{Remark}
\crefname{theorem}{Theorem}{Theorems}
\crefname{thm}{Theorem}{Theorems}
\crefname{lemma}{Lemma}{Lemmas}
\crefname{lem}{Lemma}{Lemmas}
\crefname{remark}{Remark}{Remarks}
\crefname{prop}{Proposition}{Propositions}
\crefname{defn}{Definition}{Definitions}
\crefname{corollary}{Corollary}{Corollaries}
\crefname{cor}{Corollary}{Corollaries}
\crefname{conjecture}{Conjecture}{Conjectures}
\crefname{question}{Question}{Questions}
\crefname{chapter}{Chapter}{Chapters}
\crefname{section}{Section}{Sections}
\crefname{figure}{Figure}{Figures}
\newcommand{\qPoch}{(q)_\infty}
\begin{document}

\begin{abstract}
  We show that the Mallows measure on permutations of $1,\ldots,n$ arises as the law of the unique Gale-Shapley stable matching of the random bipartite graph conditioned to be perfect, where preferences arise from a total ordering of the vertices but are restricted to the (random) edges of the graph.
  We extend this correspondence to infinite intervals, for which the situation is more intricate. We prove that almost surely every stable matching of the random bipartite graph obtained by performing Bernoulli percolation on the complete bipartite graph $K_{\Z,\Z}$ falls into one of two classes: a countable family $(\sigma_n)_{n\in\Z}$ of \emph{tame} stable matchings, in which the length of the longest edge crossing $k$ is $O(\log |k|)$ as $k\to\pm \infty$, and an uncountable family of \emph{wild} stable matchings, in which this length is $\exp \Omega(k)$ as $k\to +\infty$.
  The tame stable matching $\sigma_n$ has the law of the Mallows permutation of $\Z$ (as constructed by Gnedin and Olshanski) composed with the shift $k\mapsto k+n$.
  The permutation $\sigma_{n+1}$ dominates $\sigma_{n}$ pointwise, and the two permutations are related by a shift along a random strictly increasing sequence.
\end{abstract}

\maketitle

\section{Introduction}

In this paper we will establish a connection between two classical objects: the Mallows measure on permutations and Gale-Shapley stable marriage.
The \textbf{Mallows measure} $\Mal^n_q$ on permutations of $\{1,\ldots,n\}$ with parameter ${q\in[0,1]}$ is the probability measure that assigns to each permutation $\sigma\in S_n$ a probability proportional to $q^{\inv(\sigma)}$, where
  $\inv(\sigma)$ is the \textbf{inversion number} of $\sigma$, given by
  $$
    \inv(\sigma)=\#\bigl\{(i,j)\in \{1,\ldots,n\}^2\colon i<j\text{ but }\sigma(i)>\sigma(j)\bigr\}.
  $$
  More generally, we define the Mallows measure $\Mal^I_q$ on permutations of a general finite interval $I \subseteq \Z$ by shifting the index.
  The Mallows measure was extended to permutations of infinite intervals by Gnedin and Olshanski \cite{gnedin2010q,gnedin2012two}, who showed that for $q\in [0,1)$ and an infinite interval $I \subseteq \Z$, the measures $\Mal^{I\cap[-n,n]}_q$ converge weakly (with respect to the topology of pointwise  convergence) to a probability measure $\Mal_q^I$ on permutations of $I$.  We call this limit the Mallows measure on permutations of $I$ with parameter $q$. They also characterised the Mallows permutation of $\Z$, together with its compositions with shifts, as the unique random permutations of $\Z$ with a property that they called \emph{$q$-exchangeability}, which is equivalent to being a Gibbs measure on permutations of $\Z$ with respect to the Hamiltonian $H(\sigma)= \inv(\sigma)$ and inverse temperature $\beta=-\log q$.

  The Mallows measure was originally introduced in the context of  statistical ranking theory~\cite{mallows}. It has recently enjoyed substantial interest among both pure and applied mathematicians. In particular, analysis has been carried out of the cycle structure \cite{gladkich2016cycle} and the longest increasing subsequence \cite{basu2016limit,MR3334280,mueller2013length} of a Mallows permutation, of the longest common subsequence of two independent Mallows permutations \cite{1611.03840}, and of mixing times of related Markov chains \cite{benjamini2005mixing,diaconis2000analysis}.
  The Mallows permutation has also been studied as a statistical physics model \cite{starr2009thermodynamic,starr2015phase}, and has found applications in learning theory \cite{braverman2009sorting} and in the theory of finitely dependent processes~\cite{1706.09526}. The Mallows measure also arises as a stationary measure of the asymmetric exclusion process (ASEP) \cite{MR2857238}.

  In this paper, we show that, for both finite and infinite intervals, the Mallows permutation arises as a \emph{stable matching} of the random bipartite graph on the interval.
  In particular, we obtain a new construction of the Mallows permutation of an infinite interval.
  The finite case follows in a straightforward way by consideration of known algorithms for sampling from the Mallows distribution, while the infinite case is more subtle and requires a more delicate treatment.

The notion of stable matching was introduced in the hugely influential work of Gale and Shapley~\cite{gale1962college}.
Since then, thousands of articles on the topic have been written, and Nobel Memorial Prizes in Economics have been awarded to Roth and Shapley for related work.

Let us now describe informally the random stable matchings with which we shall be concerned. Suppose that we have a set of males and a set of females who seek to be matched into heterosexual pairs, in accordance with preferences defined as follows. %
Both the set of males and the set of females are ranked according to a universally agreed order of attractiveness.
However, each male-female pair has a probability ${q\in[0,1)}$ of being incompatible, independently of all other pairs, meaning that neither will consider the other as a partner  under any circumstances.
Attractiveness and compatibility are the only factors affecting preferences.
In particular, if each set is finite, a female's first choice for partner is the most attractive male she is compatible with, her second choice is the second most attractive male she is compatible with, and so on. A matching is a collection of compatible pairs such that each individual is in at most one pair.
A matching is stable if there does not exist a compatible male-female pair who would both prefer to be matched to each other over their current status, where any compatible partner is preferred to being unmatched.

We now introduce notation in order to make these definitions more formal. A \textbf{matching} of a graph $G$  is a set of edges no two of which share a vertex. A matching is \textbf{perfect} if every vertex has a partner.
 Given intervals $I,J\subseteq \Z$, we write $K_{I,J}$ for the graph whose vertex set is $(I \times \{\male\})\cup (J\times \{\female\})$ and whose edge set is $\bigl\{ \{(i,\male),(j,\female) \} : i \in I,\ j\in J\bigr\}$.
%
%
%
If $G$ is a subgraph of $K_{I,J}$, we say that $(i,\male)$ and $(j,\female)$ are \textbf{compatible} if there is an edge between them in $G$.
Thus, a matching of $K_{I,J}$ is a matching of $G$ if and only if every matched pair is compatible.
We identify each matching of $K_{I,J}$ with a function $\sigma: I \to J \cup \{-\infty\}$ by setting $\sigma(i)=j$ if $(i,\male)$ is matched to $(j,\female)$ and setting $\sigma(i)=-\infty$ if $(i,\male)$ is unmatched.
The function $\sigma^{-1}:J \to I \cup \{-\infty\}$ is defined similarly by interchanging the roles of $\male$ and $\female$.
In particular, if $I=J$ and the matching is perfect then $\sigma$ is a permutation of $I$ and $\sigma^{-1}$ is its inverse; this yields a bijection between perfect matchings of $K_{I,I}$ and permutations of $I$.
A matching of a subgraph $G$ of $K_{I,J}$ is \textbf{stable} if there does not exist a pair $\{(i,\male),(j,\female)\}$ such that $(i,\male)$ is compatible with $(j,\female)$, $\sigma(i) < j$, and $\sigma^{-1}(j)<i$.

Given $p\in [0,1]$, we let $K_{I,I}(p)$ be the subgraph of $K_{I,I}$ with the same vertex set as $K_{I,I}$ and where each edge is included independently at random with probability $p$.

\begin{figure}
{}\hfill\includegraphics[width=.28\textwidth]{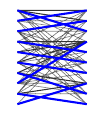}
\hfill\includegraphics[width=.28\textwidth]{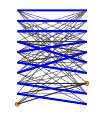}
\hfill{}
\caption{Two realizations of the random bipartite graph $K_{10,10}(0.6)$, with more attractive individuals at the top, together with their unique stable matchings, shown by thick blue lines.
The matching is perfect in the left figure, but not in the right figure.  Orange discs indicate unmatched individuals.
Thin grey (solid and dotted) lines represent edges connecting incompatible pairs.
Edges between compatible individuals that are not in the matching are not shown.
The matching can be computed without examining all edges: those that need to be examined are the edges of the matching (whose endpoints are compatible), and the solid grey edges (which are all those edges whose endpoints must be incompatible in order for the matching to be stable).  In the case of a perfect matching, these solid grey edges are simply all those that connect the upper endpoints of a crossing pair of edges of the matching.}
   \label{fig:matching}
\vspace{-0.75em}
\end{figure}

\begin{prop}\label{prop:finite}
  Let $p\in (0,1]$, let $q=1-p$, and let $I \subseteq \Z$ be an interval that is bounded above. Then the following hold.
  \begin{enumerate}
  \item[\emph{i.}] Every subgraph of $K_{I,I}$ has a unique stable matching, so that in particular $K_{I,I}(p)$ has a unique stable matching almost surely.
  \item[\emph{ii.}] If $I$ is finite then the unique stable matching of the random subgraph $K_{I,I}(p)$ is perfect with probability $\prod_{k=1}^{|I|} (1-q^k)$.
  \item[\emph{iii.}] If $I$ is infinite then the unique stable matching of $K_{I,I}(p)$ is perfect almost surely.
  \item[\emph{iv.}] Conditional on the event that the unique stable matching of $K_{I,I}(p)$ is perfect, it is distributed as a Mallows permutation of $I$ with parameter $q$.
  \end{enumerate}
\end{prop}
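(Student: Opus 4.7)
The plan is to construct the unique stable matching by a greedy top-down procedure: iteratively process the females in decreasing order of attractiveness, matching each to her favorite remaining compatible male (or leaving her unmatched if no compatible remaining male exists).

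For part (i), the \emph{mutual-top} principle drives both uniqueness and stability. If $f$ is the currently-topmost remaining female and $m$ is her favorite remaining compatible male, then $f$ is also $m$'s favorite remaining compatible partner (since no female above $f$ is available), and hence any stable matching of the remaining subgraph must pair $m$ with $f$, lest $(m,f)$ be a blocking pair. Iterating downward, one female per round, exhausts the females in countably many steps (using that $I$ is bounded above); any unclaimed males are simply unmatched. Stability of the resulting $\sigma$ is then a direct check: a putative blocking pair $(m,f)$ with $\sigma(m)<f$ and $\sigma^{-1}(f)<m$ would force $f$ to have chosen $m$ or a higher-ranked male at the moment she was processed, contradicting $\sigma(m)<f$.

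For parts (ii) and (iv) in the finite case, take $I=\{1,\ldots,n\}$ after an order-preserving relabeling. The topmost female $f_n$ has a compatible male with probability $1-q^n$, and, conditional on this, her partner $m_K$ satisfies $P(K=k)\propto p q^{n-k}$ (the truncated-geometric law of the top success among $n$ independent Bernoulli$(p)$ edges). Given $K$, independence of the remaining edges leaves the sub-problem on $\{m_i:i\ne K\}\cup\{f_j:j<n\}$ distributed, after order-preserving relabeling, as $K_{\{1,\ldots,n-1\},\{1,\ldots,n-1\}}(p)$. This yields the recursion $P_n=(1-q^n)P_{n-1}$ for the perfect-matching probability $P_n$, establishing (ii). Conditioning on global perfection via Bayes gives $P(K=k\mid\text{perfect})=q^{n-k}/[n]_q$, matching the Mallows marginal of $\sigma^{-1}(n)$; induction on $n$ then identifies the conditional law with $\Mal^I_q$, proving (iv) in the finite case.

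For the infinite case, run the same greedy procedure with $N=\sup I$. Each female has infinitely many potential mates, so is almost surely matched. Part (iii) --- perfection on the male side --- follows by a symmetry argument: the analogous male-top greedy procedure also produces a stable matching (by the analogous stability check), which by the uniqueness in (i) must coincide with the matching from the female side; since each male also has infinitely many potential mates, every male is almost surely matched. For (iv) in the infinite case, no conditioning is needed, and the law of $K=\sigma^{-1}(N)$ becomes $P(K=N-j)=(1-q)q^{j}$ for $j\geq 0$, matching the Gnedin--Olshanski marginal; the same recursive peeling identifies $\sigma$ with $\Mal^I_q$. The main obstacle in this proof is verifying perfection in the infinite case, and it is the symmetry-and-uniqueness argument that dispatches it cleanly, sidestepping the need for quantitative estimates on finite truncations.
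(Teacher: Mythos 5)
Your proof is correct in outline and shares the paper's basic strategy (a greedy, attractiveness-ordered construction that simultaneously proves uniqueness and yields the law), but it differs in several genuine ways. You peel off the topmost \emph{female} and her favourite remaining male, whereas the paper peels off the topmost \emph{male} and matches him to his most attractive compatible female; both rest on the same mutual-top principle and give the same unique matching. For (iv) in the finite case you argue by induction via the marginal of $\sigma^{-1}(n)$ (the value-$n$ term of the Lehmer code, $\P(K=k\mid\text{perfect})=q^{n-k}(1-q)/(1-q^{n})$) plus conditional independence of the reduced problem, while the paper simply multiplies the conditional probabilities to get $\P(\sigma=\tau)=q^{\inv(\tau)}(1-q)^{n}$ directly; your route is a little slicker and also recovers the $q$-insertion description of Mallows. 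Your symmetry-plus-uniqueness argument for (iii) is a nice touch: the paper's written proof only checks that every male is matched, and your observation that the gender-reversed procedure must produce the \emph{same} matching cleanly disposes of the female side as well. The one place where you owe more than you deliver is (iv) for infinite $I$: the paper defines $\Mal_q^I$ as the weak limit of the finite Mallows measures and proves the identity against that definition (thereby also re-deriving existence of the limit, which the authors advertise as a byproduct), whereas you identify the law by matching the recursive peeling structure to the Gnedin--Olshanski description. That is fine as long as you actually cite or verify that the weak-limit measure has geometric top-value marginal \emph{and} the conditional reduced permutation is again Mallows on $(-\infty,N-1]$ --- the marginal of $\sigma^{-1}(N)$ alone does not determine the law, and the convergence of the conditional laws under the weak limit is exactly the content the paper proves by its truncation computation. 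So your argument is complete modulo that citation, but it proves strictly less than the paper's version.
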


We remark that the limit $(q)_\infty:=\prod_{k=1}^{\infty}(1-q^k)$ as $|I|\to\infty$ of the probabilities appearing in Proposition \ref{prop:finite}.ii is positive
 but strictly less than $1$ 
 for each $q\in (0,1)$.
We also remark, as a point of general interest, that the asymptotics of this infinite product were computed by Hardy and Ramanujan \cite{MR1575586} to be
\begin{align*}
  \qPoch := \prod_{k=1}^{\infty}(1-q^k) \sim
   \sqrt{ 2 \pi (1-q) } \exp \left[ - \frac{\pi^2}{ 6 (1-q)} \right]
   \qquad \text{ as }q\nearrow1.
\end{align*}
The function $\qPoch$ is the reciprocal of the generating function of integer partitions. It is also known as both the \textbf{$q$-Pochhammer symbol}  and the \textbf{Euler function}, owing to its role in Euler's pentagonal number theorem \cite{EulerPentagonal}.


\begin{figure}[t]
{}\hfill\includegraphics[width=.18\textwidth]{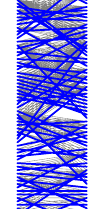}
\hfill\includegraphics[width=.18\textwidth]{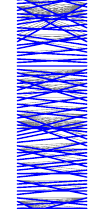}
\hfill\includegraphics[width=.18\textwidth]{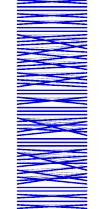}
\hfill {}
\caption{A portion of the balanced tame stable matching of $K_{\Z,\Z}(p)$, for $p=0.2$ (left), $p=0.4$ (middle), and $p=0.6$ (right), shown by blue lines. The law of the matching corresponds to the Mallows measure on permutations of $\Z$. Thin grey lines indicate edges whose endpoints must be incompatible  for this matching to be stable. Other edges are omitted.}
\vspace{-0.7em}
\label{fig:tame}
\end{figure}

The situation for intervals that are unbounded from above (so that there do not exist maximally attractive individuals) in very different, and is the main topic of this paper.
Indeed, for $p\in (0,1)$ the random graph $K_{\m Z,\m Z}(p)$ has \emph{uncountably many} stable matchings, and even uncountably many stable matchings that are not perfect.
We will prove, however, that the stable matchings of $K_{\Z,\Z}(p)$ fall into two sharply distinguished classes:
a countable family of \textbf{tame} matchings which correspond to compositions of the Mallows permutation of $\Z$ with shifts, and an uncountable family of \textbf{wild} stable matchings. Moreover, the tame and wild stable matchings have quantitatively very different behaviours.

To state these results we introduce some more definitions.
For an interval $I \subseteq \Z$, a matching $\sigma$ of $K_{I,I}$ and $i\in I$, we define the quantities
\begin{align*}
  L_+\bigl(\sigma,i+\tfrac{1}{2}\bigr)&=
  \#\Bigl\{j\leq i+\tfrac{1}{2} \text{ such that } \sigma(j)\geq i+ \tfrac{1}{2} \Bigr\}
\intertext{and}
  L_-\bigl(\sigma,i+\tfrac{1}{2}\bigr)&=
  \#\Bigl\{j\geq i+\tfrac{1}{2} \text{ such that } \sigma(j)\leq i+ \tfrac{1}{2} \Bigr\}.
\end{align*}
That is, $L_+(\sigma,i+\tfrac{1}{2})$ and $L_-(\sigma,i+\tfrac{1}{2})$ are the numbers of edges crossing over $i+\tfrac{1}{2}$ in each direction.
We say that the matching $\sigma$ is \textbf{locally finite} if $L_+(\sigma,i+\tfrac{1}{2}) + L_-(\sigma,i+\tfrac{1}{2})$ is finite for some (and hence every) $i\in I$. Note that matchings of intervals other than $\Z$ are always locally finite.
If $\sigma$ is perfect and locally finite, we define the \textbf{flow} of $\sigma$ to be
$$
\flow(\sigma)= L_+\left(\sigma,i+\tfrac{1}{2}\right)-L_-\left(\sigma,i+\tfrac{1}{2}\right),
$$
which is easily seen to be independent of $i$.
We say that $\sigma$ is \textbf{balanced} if it is perfect, locally finite, and has flow zero.

For each matching $\sigma$ of $K_{\Z,\Z}$ and each $i \in \Z$, we also define
\begin{equation*}
  M(\sigma,i+\tfrac{1}{2}) = \max\Big\{|\sigma(j)-j| : j < i {+} \tfrac{1}{2} < \sigma(j) \text{ or } -\infty < \sigma(j) < i {+} \tfrac{1}{2} < j \Big\}
\end{equation*}
to be the length of the longest edge in the matching crossing $i+\tfrac{1}{2}$, where the maximum of the empty set is taken to be zero.
We say that a matching $\sigma$ of $K_{\Z,\Z}$ is \textbf{tame} if it is  locally finite and
\[
  \limsup_{i\to \pm \infty}\frac{M\bigl(\sigma,i+\tfrac{1}{2}\bigl)}{\log |i|} < \infty,
\]
that is, if the maximum length of an edge  of $\sigma$ crossing $i+\tfrac{1}{2}$ is at most logarithmically large in $i$.
On the other hand, we say that a  matching  $\sigma$ is \textbf{wild}
if
\[
  \liminf_{i\to +\infty} \frac{\log M\bigl(\sigma,i+\tfrac{1}{2}\bigl)}{i} >0
\]
that is, if the maximum length of an edge crossing $i+\tfrac{1}{2}$ grows at least exponentially as $i \to+ \infty$. In particular, every matching that is not locally finite is wild.
There is a wide gulf between tameness and wildness -- it is easy to construct matchings of $K_{\Z,\Z}$ that are neither tame nor wild.  However, our stable matchings are either tame or wild.

\begin{thm}[Tame/wild dichotomy]
  \label{thm:tameorwild}
  Let $p\in (0,1]$, let $q=1-p$, and consider the random bipartite graph $K_{\Z,\Z}(p)$. Almost surely, every locally finite stable matching of $K_{\Z,\Z}(p)$ is perfect, and every stable matching of $K_{\Z,\Z}(p)$ is either tame or wild.
\end{thm}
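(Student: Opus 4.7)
Proof plan.

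The theorem asserts two things: (a) every locally finite stable matching of $K_{\Z,\Z}(p)$ is almost surely perfect, and (b) every stable matching of $K_{\Z,\Z}(p)$ is almost surely tame or wild. Since a matching that fails to be locally finite is wild by definition, statement (b) reduces to showing that every locally finite stable matching is tame or wild. Both parts will follow from a single stability-driven \emph{cascade} mechanism, combined with a Chernoff estimate for the underlying Bernoulli percolation.

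The basic deterministic observation driving the cascade is as follows: if $\sigma$ has an edge $(j,\sigma(j))$ with $\sigma(j)>j$ and length $L=\sigma(j)-j$, then stability forces $\sigma(k)\ge\sigma(j)$ for every male $k\in(j,\sigma(j))$ that is compatible with female $\sigma(j)$. A Chernoff bound then shows that with probability $1-e^{-\Omega(L)}$ the number of such compatibility-constrained $k$ is at least $pL/2$, so that $\sigma$ sends a positive density of males in $(j,\sigma(j))$ into females in $[\sigma(j),\infty)$. Applying the same reasoning iteratively to each newly constrained pair produces the \emph{cascade lemma}: almost surely, and simultaneously for every stable matching $\sigma$ and every large enough displacement $L$ at a sufficiently far position $i$, there exist positions $i=i_0<i_1<\cdots$ such that an edge of $\sigma$ of length $L_n$ crosses $i_n+\tfrac12$, the lengths $(L_n)$ grow with multiplicative ratio at least $1+\alpha(p)>1$, and the positional increments $i_{n+1}-i_n$ are controlled tightly enough to yield exponential growth of $L_n$ in $i_n$. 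A Borel-Cantelli argument over the Chernoff estimates, applied dyadically across scales $L_n$, makes the almost-sure statement uniform in $\sigma$.

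Granted the cascade lemma, both parts of the theorem follow. For (a), suppose $\sigma$ is locally finite but some male $i_0$ is unmatched; stability then forces all females compatible with $i_0$ to be matched strictly to the right of $i_0$, which serves as the initial seed of the cascade, whose exponentially growing edge lengths contradict local finiteness further to the right. For (b), if $\sigma$ is locally finite, perfect, and not tame, then $M(\sigma,i_0+\tfrac12)\ge C\log i_0$ for arbitrarily large $i_0$ and arbitrarily large $C$; feeding this into the cascade lemma gives $\log M(\sigma,i+\tfrac12)\ge \Omega(i)$ along a sequence of $i$'s tending to $+\infty$, whence $\sigma$ is wild.

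The principal obstacle is establishing the cascade lemma with a positional cost per step small enough to produce exponential growth of $M$ in $i$, not merely linear growth. A naive reading of the stability forcing controls the position of the next long edge only up to an additive cost proportional to $L$, which would yield $L_n\asymp i_n$ — insufficient for wildness. To achieve the required exponential growth one must exploit the fact that each long edge of length $L$ generates $\Omega(L)$ simultaneous further long edges via stability, and then select among them one whose endpoints place the next long edge at bounded positional distance while multiplying its length by a definite factor exceeding $1$. A careful coupled Chernoff analysis across scales is then needed to preserve the required margins throughout the iteration.
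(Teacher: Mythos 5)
Your proposal has the right flavour (stability forcing plus Chernoff plus Borel--Cantelli) but two genuine gaps, one in the growth mechanism and one in the logical structure of the dichotomy. On the mechanism: you correctly identify that the naive forcing only yields $L_n\asymp i_n$, but the fix you sketch --- selecting among the $\Omega(L)$ forced edges one that multiplies the length by a definite factor $1+\alpha(p)>1$ at bounded positional cost --- is not justified and, as far as I can see, false: a male $k\in(j,\sigma(j))$ compatible with the female $\sigma(j)$ is only forced to satisfy $\sigma(k)>\sigma(j)$, so the forced edge has length greater than $\sigma(j)-k$, which is \emph{shorter} than $L$ unless you already know $\sigma(k)$ is much larger. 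The paper's actual argument is not an iterated multiplicative cascade but a single-step jump from linear to exponential: the $\Omega(\eps i)$ males in $(i,i+\eps i]$ forced upward land on a set $B$ of females, while (using that the matching is balanced with a cut, itself a consequence of the coupling bound in \cref{prop:couplingbound}) a set $W'$ of $\Omega(\eps i)$ females below $i+\eps i$ must be matched to a set $A$ of males above $i+\eps i$; stability forces $A\times\{\male\}$ and $B\times\{\female\}$ to be \emph{entirely mutually incompatible}, and a first-moment count shows two such sets of size $\Omega(i)$ cannot both fit inside $[0,e^{\delta i}]$. That counting argument is the missing idea.

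On the structure: you reduce part (b) to ``locally finite, perfect, and not tame implies wild,'' seeding the cascade with edges of length $C\log i_0$ along a subsequence. This cannot work. Wildness is a $\liminf$ condition requiring exponentially long edges across \emph{every} sufficiently large $i$, which you cannot get from seeds along a sparse subsequence; moreover, logarithmic-length seeds give only polynomial failure probabilities in the Chernoff step, not summable uniformly over configurations, and a priori there could be matchings with, say, $M(\sigma,i+\tfrac12)\asymp\sqrt i$, which are neither tame nor wild and which your argument does not exclude. The paper's dichotomy is instead governed by $\liminf_i M(\sigma,i+\tfrac12)/i$: if this is below a threshold $c_2(q)$, then $\sigma((-\infty,i])$ lies infinitely often in a family of at most $2^{2ai+1}$ ``low profiles,'' and a union bound over \cref{prop:couplingbound} forces $\sigma$ to \emph{equal} the Mallows matching of its flow, hence be tame (\cref{lem:tamesufficient}); if it is positive, the incompatible-sets argument gives wildness (\cref{lem:wildsufficient}). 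The first half --- that sub-linear crossing lengths force identification with the Mallows matching --- is entirely absent from your proposal and is indispensable. (Your part (a) is essentially fine, though simpler than you make it: the females compatible with an unmatched male form a set unbounded below, each matched to a more attractive male, so infinitely many edges cross a single point; no cascade is needed.)
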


Simulated examples of tame and wild matchings of $K_{\Z,\Z}(p)$ are depicted in \cref{fig:tame,fig:flow,fig:wild}. Note that the definition of wildness is asymmetric, and does not say anything about the behaviour as $i\to-\infty$. In fact, we will show that for every perfect, locally finite stable matching $\sigma$ of $K_{\Z,\Z}(p)$, there is a tame stable matching that agrees with $\sigma$ at all sufficiently large negative $i$ -- see \cref{cor:eventualcoupling}. Next, we relate the tame matchings of $K_{\Z,\Z}(p)$ to the Mallows permutation, and describe some of their properties.

\begin{figure}[t]
\includegraphics[width=.22\textwidth]{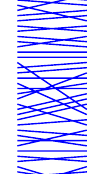}
\hfill\includegraphics[width=.22\textwidth]{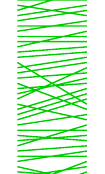}
\hfill\includegraphics[width=.22\textwidth]{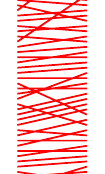}
\hfill\includegraphics[width=.22\textwidth]{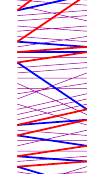}
\caption{Far left: the balanced tame matching $\sigma_0$ of $K_{\Z,\Z}(0.4)$.  Middle left: a matching equal in law to the matching $\sigma_1$ of flow $1$ is obtained by composing $\sigma_0$ with a shift by $1$ (depicted as an upward shift of vertices on the right).  Middle right: the matching $\sigma_1$ for the same realization of the graph $K_{\Z,\Z}(0.4)$.  Far right: the two matchings $\sigma_0$ and $\sigma_1$ superimposed.  Edges unique to $\sigma_0$ and $\sigma_1$ are shown by thick blue and red lines respectively, while edges common to both matchings are shown by thin purple lines.  The symmetric difference consists of a single bi-infinite path that is increasing on both sides.}
\label{fig:flow}
\end{figure}

\begin{thm}[Classification of tame matchings]
  \label{thm:tamechar}
  Let $p\in (0,1]$, let $q=1-p$, and consider the random bipartite graph $K_{\Z,\Z}(p)$.
  Almost surely, the tame stable matchings of $K_{\Z,\Z}(p)$ form a countable family $(\sigma_n)_{n\in \Z}$ with the following properties.
  \begin{enumerate}
  \item[\emph{i.}]
    For each $n\in \Z$, the matching $\sigma_n$ is perfect and has flow $n$.
  \item[\emph{ii.}] The  stable matching $\sigma_n$ is the almost sure pointwise limit of the unique stable matching $\sigma_{n,m}$ of $K_{(-\infty,m],(-\infty,m+n]}(p)$ as $m\to\infty$.
  \item[\emph{iii.}] The matching $\sigma_n$ is distributed as the composition $\sigma$ with the shift $i \mapsto i+n$, where $\sigma$ is a Mallows-distributed permutation of $\m Z$ with parameter $q$.
  \item[\emph{iv.}] If $n' \leq n$, then the males do no better in $\sigma_{n'}$ than in $\sigma_{n}$, and the females do no worse. That is, $\sigma_{n'}(i) \leq \sigma_n(i)$ and $\sigma_{n'}^{-1} \geq \sigma_n^{-1}(i)$ for every $i \in \Z$. Moreover, for every $n\in \Z$, the matchings $\sigma_n$ and $\sigma_{n+1}$ agree except on a strictly increasing sequence $(i_{n,k})_{k\in \Z}$, for which $\sigma_n(i_{n,k})=\sigma_{n+1}(i_{n,k+1})$ for all $k\in \Z$.
\end{enumerate}
\end{thm}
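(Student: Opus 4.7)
The plan is to construct the candidate matchings $\sigma_n$ as almost sure pointwise limits of finite stable matchings, and then to show separately that these exhaust the tame stable matchings. Fix $n \in \Z$. For each $m \in \Z$, the obvious analogue of \cref{prop:finite} for $K_{I,J}$ with both $I,J$ bounded above (proved by the same top-down recursion, starting from the topmost male) furnishes an almost surely unique stable matching $\sigma_{n,m}$ of $K_{(-\infty,m],(-\infty,m+n]}(p)$ that is perfect, with law equal to the Mallows-$q$ permutation of $(-\infty,m]$ composed with the shift $i \mapsto i+n$. The first step is to show that for each fixed $i \in \Z$, the random variable $\sigma_{n,m}(i)$ stabilizes almost surely as $m \to \infty$. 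The top-down sampling only reaches coordinate $i$ after recursively matching everyone above, and the influence on $\sigma_{n,m}(i)$ of adding a new top male is controlled by the probability that a cascade of re-matchings reaches level $i$, which decays geometrically in $m-i$ via the explicit Mallows marginal distribution of $|\sigma(j)-j|$. This proves (ii) and defines the limits $\sigma_n$.

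Part~(i) then follows: perfectness of $\sigma_n$ is inherited from the finite case, and the flow equals $n$ by a deterministic conservation identity. For any $i < m$, since every female in $(-\infty,m+n]$ is matched in $\sigma_{n,m}$ and the males in $(i,m]$ contribute exactly $m-i$ matches, a bookkeeping count gives $L_+(\sigma_{n,m}, i+\tfrac{1}{2}) - L_-(\sigma_{n,m}, i+\tfrac{1}{2}) = n$, and this identity is preserved under the pointwise limit. For (iii), weak convergence of the Mallows laws on $(-\infty,m]$ to the Gnedin-Olshanski Mallows law on $\Z$, combined with the almost sure pointwise convergence $\sigma_{n,m} \to \sigma_n$, identifies the law of $\sigma_n$ as claimed.

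For (iv), monotonicity at the finite level is the standard comparative-statics result in stable matching: passing from $\sigma_{n,m}$ to $\sigma_{n+1,m}$ adds one extra top-ranked female to the pool, which in the unique stable matching can only improve the outcome for each male and worsen it for each female. Taking limits yields $\sigma_n(i) \leq \sigma_{n+1}(i)$ and $\sigma_n^{-1}(i) \geq \sigma_{n+1}^{-1}(i)$ for every $i$. The symmetric difference of $\sigma_n$ and $\sigma_{n+1}$ is a disjoint union of paths and cycles; coordinatewise domination excludes finite cycles (walking around would force equality of the two matchings on the cycle, contradicting alternation), and perfectness of both matchings excludes finite paths (whose endpoints would have degree one in the symmetric difference). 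Hence the symmetric difference is a disjoint union of bi-infinite alternating paths, each monotone along its length by the domination. Each such path contributes $+1$ to $\flow(\sigma_{n+1}) - \flow(\sigma_n) = 1$, so there is exactly one path. Ordering its male endpoints gives the sequence $(i_{n,k})_{k\in\Z}$ with the claimed shift relation.

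The main obstacle is the uniqueness statement: that \emph{every} tame stable matching coincides with some $\sigma_n$. Given such a $\tau$, \cref{thm:tameorwild} already guarantees it is locally finite and perfect, so it has a well-defined flow $n$. To prove $\tau = \sigma_n$ I would exploit the $O(\log|i|)$ edge length bound of tameness: for each large $m$, the restriction of $\tau$ above level $m - O(\log m)$ can be locally modified within a window of width $O(\log m)$ near $m$ to produce a genuine stable matching of $K_{(-\infty,m],(-\infty,m+n]}(p)$, which by the uniqueness in \cref{prop:finite} must equal $\sigma_{n,m}$; since the window lies far from any fixed $i$, this forces $\tau(i) = \sigma_{n,m}(i) = \sigma_n(i)$. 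Carrying out this local repair while preserving stability, and showing it only ever modifies a window of width $O(\log m)$, is the technical heart of the argument. Once it is done, countability of the tame family follows immediately and the classification is complete.
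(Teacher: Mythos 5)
Your construction of the $\sigma_n$ as limits of the semi-infinite matchings $\sigma_{n,m}$, and your treatment of parts (i), (iii) and (iv), follow the paper's outline in broad strokes (for (iv) the paper explicitly builds the alternating path by a recursion $i_0=m+n+1$, $j_k=\sigma_{m,n+1}^{-1}(i_{k-1})$, $i_k=\sigma_{m,n}(j_k)$ rather than using your symmetric-difference/flow-counting argument, but your route is viable, modulo checking that the comparative-statics monotonicity really holds for the semi-infinite greedy matchings). However, there are two genuine gaps. First, the convergence in (ii): you assert that the effect on $\sigma_{n,m}(i)$ of adding a new top male "decays geometrically via the Mallows marginal of $|\sigma(j)-j|$", but the displacement distribution of a single edge does not control how a discrepancy between two matchings propagates downward. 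The paper's actual mechanism is \cref{prop:couplingbound}: each half-integer level is, with conditional probability at least $(1-q)\qPoch^2$, simultaneously a cut for both matchings, and below a mutual cut the two matchings coincide by the uniqueness in \cref{prop:finite}. Establishing that lower bound requires the correlation inequality of \cref{lem:finite_sets} and the sequential alpha-male sampling scheme; your sketch supplies no substitute for this, and the "cascade" heuristic is not a proof.

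Second, and more seriously, the classification itself --- that \emph{every} tame stable matching equals some $\sigma_n$ --- is precisely the part you leave unproven, and the "local repair" you propose is the wrong tool: truncating a matching of $K_{\Z,\Z}$ at level $m$ and re-stabilizing inside a window of width $O(\log m)$ is not obviously possible, and nothing in your plan indicates how to do it. The paper's \cref{lem:tamesufficient} needs no repair at all: the restriction of a stable matching $\tau$ to $(-\infty,i]\times\tau((-\infty,i])$ is \emph{already} a stable matching of $K_{(-\infty,i],\,\tau((-\infty,i])}(p)$, hence equals the unique one for that pair of low sets; the tameness bound forces the low set $A=\tau((-\infty,i])$ to lie in a family $\sA_{i,a}$ of cardinality at most $2^{2ai+1}$ for infinitely many $i$; and a union bound of this entropy against the exponential coupling estimate $c_1(q)^{(1-a)i-k}$ from \cref{prop:couplingbound} shows that almost surely all the matchings $\sigma_{(-\infty,i],A}$ with $A\in\sA_{i,a}$ agree with the balanced Mallows matching below any fixed level once $i$ is large. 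This handles all (a priori uncountably many) candidate $\tau$ simultaneously on a single almost sure event, which is where the quantitative strength of the coupling bound is genuinely needed; your proposal does not contain this idea.
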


In particular, note that while $\sigma_{n+1}$ is \emph{distributed} as the shift of $\sigma_n$, it is not \emph{equal} to the shift of $\sigma_n$ (see \cref{fig:flow}).
\cref{thm:tamechar}.iii is reminiscent of the situation for stable matchings with general preferences on finite sets, where there are two extremal matchings, one of which is female-optimal and male-pessimal while the other is male-optimal and female-pessimal \cite{gale1962college}.

Finally, we prove that wild stable matchings do indeed exist. (The fact that tame matchings also exist is part of \cref{thm:tamechar}).

\begin{thm}[Existence of wild matchings]
  \label{thm:wildexistence}
  Let $p\in(0,1)$, let $q=1-p$, and consider the random bipartite graph $K_{\Z,\Z}(p)$.
  Almost surely, there exist stable matchings of $K_{\Z,\Z}(p)$ in each of the following categories:
  \begin{itemize}
  \item not perfect,
  \item perfect but not locally finite,
  \item perfect and locally finite, but wild.
  \end{itemize}
  Indeed, there almost surely exist uncountably many stable matchings in each category.
  Also, if $I$ is an infinite interval that is bounded from below, then $K_{I,I}(p)$ has uncountably many wild stable matchings almost surely.
\end{thm}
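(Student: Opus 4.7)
The plan is to construct each type of exotic stable matching as a limit of stable matchings on truncations $K_{(-\infty,M],(-\infty,M]}(p)$ with modified boundary conditions, extending the truncation argument underlying \cref{thm:tamechar}.

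For the non-perfect case, fix an infinite set $S \subseteq \Z$ with elements $a_1 < a_2 < \cdots$ growing sufficiently fast. For each $M$, let $G_M^S$ be the subgraph of $K_{(-\infty,M],(-\infty,M]}(p)$ obtained by deleting every edge incident to males in $S \cap (-\infty,M]$. By \cref{prop:finite}.i, $G_M^S$ has a unique stable matching $\tau_M^S$, in which every element of $S \cap (-\infty,M]$ is unmatched. Arguing as for \cref{thm:tamechar}.ii, one expects $\tau_M^S$ to converge pointwise almost surely to a matching $\sigma_S$ of $K_{\Z,\Z}(p)$. Stability of $\sigma_S$ in the full graph then reduces to showing that no deleted edge $\{(s,\male),(j,\female)\}$ with $s \in S$ forms a blocking pair, i.e.\ that every female $j$ compatible with $s$ in $K_{\Z,\Z}(p)$ is matched in $\sigma_S$ to a male above $s$. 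Since away from $S$ the matching $\sigma_S$ resembles a tame matching and hence has only logarithmically long edges by \cref{thm:tameorwild}, the sparsity of $S$ makes this a tail event, which a Borel--Cantelli estimate shows is satisfied almost surely.

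For the perfect-but-not-locally-finite case, select pairs $\{(a_k,\male),(b_k,\female)\}$ that are edges of $K_{\Z,\Z}(p)$, with $a_k \to +\infty$ and $b_k \to -\infty$ (such pairs exist almost surely by Borel--Cantelli). For the wild locally finite case, instead select edge pairs with $a_k \to +\infty$ and $b_k - a_k \geq e^{c a_k}$ for some $c > 0$ (again possible almost surely). In either case, let $G_M$ be the subgraph of $K_{(-\infty,M],(-\infty,M]}(p)$ in which, for each selected pair with both endpoints in the truncation, all other edges incident to $(a_k,\male)$ or $(b_k,\female)$ have been deleted. The unique stable matching of $G_M$ provided by \cref{prop:finite}.i is forced to use the selected long edges; passing to the limit in $M$ yields the desired stable matching of $K_{\Z,\Z}(p)$, with stability in the full graph again verified by a Borel--Cantelli estimate. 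Uncountably many matchings of each type are obtained from the uncountable Cantor-like families of admissible choices of $S$ or of the sequence of forced pairs: distinct choices produce matchings that differ in their unmatched vertex sets or in their long edges, and are therefore distinct. The bounded-below case is handled by exactly the same construction, since $(-\infty,M] \cap I$ remains bounded above for $I$ bounded below.

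The main obstacle is the quantitative stability verification for the limit $\sigma_S$ (and its forced-edge analogues) inside the full graph $K_{\Z,\Z}(p)$: the truncated matching $\tau_M^S$ is stable only in the edge-deleted graph $G_M^S$, so showing that no deleted or omitted edge becomes a blocking pair in the limit requires understanding how tame-matching behaviour persists under these sparse boundary modifications. I expect this will reuse and adapt the edge-length tail estimates developed for \cref{thm:tameorwild,thm:tamechar}, combined with Borel--Cantelli applied over the scales determined by the sparsity of $S$ or the prescribed growth of $(a_k)$.
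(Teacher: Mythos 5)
Your approach has a fatal gap: the deferred ``Borel--Cantelli stability verification'' is not a technical afterthought but the point where the construction breaks, and it cannot be repaired in the form you propose. Stability in the full graph $K_{\Z,\Z}(p)$ imposes severe global constraints that are incompatible with a matching that is ``tame away from a sparse defect set''. Concretely, for the non-perfect case: if $(s,\male)$ is unmatched in a stable matching $\sigma$, then \emph{every} female compatible with $(s,\male)$ must be matched to a male more attractive than $(s,\male)$; since the set of such females is almost surely unbounded below, infinitely many females below any given level are matched above $s$, so $\sigma$ cannot be locally finite. (This is exactly the first assertion of \cref{thm:tameorwild}: almost surely every locally finite stable matching is perfect.) Your limit $\sigma_S$ is designed to resemble a tame matching away from $S$, with only logarithmically long edges there; any female far below $s$ who is compatible with $(s,\male)$ --- and such females exist almost surely --- is then matched locally, far below $s$, and forms a blocking pair with $(s,\male)$. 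So the event you hope to control by Borel--Cantelli in fact fails almost surely. The same obstruction defeats the forced-long-edge constructions: by the mechanism of \cref{lem:wildsufficient}, a single edge of length $L$ crossing a level forces, for stability, on the order of $pL$ further edges crossing higher levels with far greater length. You cannot force a few isolated long edges and let the rest of the matching relax to tame behaviour; the forced edges propagate an exponential cascade.

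The paper avoids this entirely by building the matching greedily rather than as a limit of truncations: at each step the new partner is chosen, from an almost surely unbounded candidate set, to be \emph{more attractive than every previously matched individual} and \emph{incompatible with every previously matched individual of the opposite gender}. These two properties make stability automatic by construction (the new partner cannot block with anyone already matched, and the newly matched subject prefers this very attractive partner to every previously matched alternative), and the resulting flight of partners toward $+\infty$ is precisely what produces non-perfection, non-local-finiteness, or wildness depending on which vertex set is enumerated. Uncountably many matchings arise from the free choice, at each step, of which element of the unbounded candidate set to take. If you wish to salvage a limit-of-truncations approach, the boundary modifications would have to encode the entire exponential cascade of long edges, at which point you have essentially rebuilt the paper's direct construction.
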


\begin{figure}
{}\hfill\includegraphics[width=.2\textwidth]{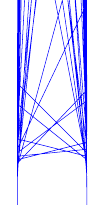}
\hfill\includegraphics[width=.2\textwidth]{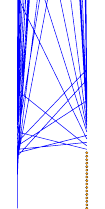}
\hfill\includegraphics[width=.2\textwidth]{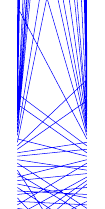}
\hfill {}
\caption{Portions of wild matchings of $K_{\Z,\Z}(0.1)$.  Left: a perfect matching that is not locally finite.  Middle: a non-perfect matching.  Right: a perfect, balanced wild matching.}
\label{fig:wild}
\end{figure}

Matchings satisfying the conditions of \cref{thm:wildexistence} can be constructed via a simple and explicit algorithm. (In particular, no appeal to the axiom of choice is required.)

We remark that \cref{thm:tameorwild} and \cref{thm:wildexistence} are sharp in the sense that, disregarding constants, the definitions of tame and wild cannot be strengthened without the theorem becoming false. In other words, the tame stable matchings of $K_{\Z,\Z}(p)$ have logarithmically long edges, and there exist wild stable perfect matchings of $K_{\Z,\Z}(p)$ in which the longest edge crossing $i+\tfrac{1}{2}$ is at most exponentially large in $i$. See \cref{prop:tamesharp,prop:wildsharp} respectively for the precise statements.

\medskip

\paragraph{\textbf{About the proofs}}
Recall from \cref{prop:finite} that there is a unique stable matching of every interval that is bounded from above.
A central step in the proofs of \cref{thm:tameorwild,thm:tamechar} is to prove that the unique stable matchings of two different intervals, both bounded from above, coincide for all sufficiently large negative $i$, and moreover that the distance elapsed before they couple in this way has an exponential tail.
In particular, we will prove that if $\sigma$ and $\sigma'$ are the unique stable matchings of $K_{(-\infty,0],(-\infty,0]}(p)$ and
$K_{(-\infty,n],(-\infty,n]}(p)$ for some $n \geq 0$, then
\begin{equation}
\label{eq:couplingintro}
\P\Bigl(\sigma(-i) = \sigma'(-i) \text{ for all $i \geq k$}
\Bigr) \geq 1- \left[1-(1-q)\qPoch^2\right]^k.
\end{equation}
Note that the bound on the right of \eqref{eq:couplingintro} does not depend on $n$.
The exact statement required for the proofs of \cref{thm:tameorwild,thm:tamechar} is a little more general than this and is given in \cref{prop:couplingbound}.

To prove this proposition, we consider the \emph{mutual cuts} of $\sigma$ and $\sigma'$.
 If $\sigma$ is a permutation of $\Z$, we say that $i + \tfrac{1}{2}$ is a \textbf{cut} for $\sigma$ if $\sigma$ fixes the sets $\{j \in \Z : j < i +\tfrac{1}{2}\}$ and $\{j \in \Z : j > i +\tfrac{1}{2}\}$. If, in the setting above, $i \geq 0$ and $-i+\tfrac{1}{2}$ is a cut for both $\sigma$ and $\sigma'$, then it follows from \cref{prop:finite} that $\sigma(-j)=\sigma'(-j)$ for all $j\geq i$. Thus, to prove \eqref{eq:couplingintro}, it suffices to prove that
 \begin{equation}
 \label{eq:introcouplingBernoulli}
\Bigl(\mathbbm{1}\!\big[-i +\tfrac{1}{2}  \text{ is a cut for both $\sigma$ and $\sigma'$}\big]\Bigr)_{i \geq 0}
 \end{equation}
 stochastically dominates an i.i.d.\ Bernoulli process with parameter $(1-q)\qPoch^2$.

 The proof of \cref{prop:couplingbound} also yields the following variation of this result concerning a single permutation, which is of independent interest.

\begin{prop}
\label{prop:cutbernoulli}
Let $q\in [0,1)$, let $I$ be an infinite interval, and let $\sigma$ be a random permutation of $I$ drawn from the Mallows distribution with parameter $q$. Then
the process
\[\Bigl(\mathbbm{1}\!\big[i +\tfrac{1}{2}  \text{ is a cut for $\sigma$}\big]\Bigr)_{i \in \Z}\] stochastically dominates an i.i.d.\ Bernoulli process with parameter $(1-q)\qPoch$.
\end{prop}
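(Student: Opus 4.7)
The plan is to reduce to the case of an interval bounded above, for which \cref{prop:finite} applies directly; the bounded-below case follows by reflection, and the case $I = \mathbb Z$ by passing to the weak limit $\Mal_q^{I \cap [-n, n]} \to \Mal_q^I$ (using that a cut at a given position is an open event in the product topology). Take $I = (-\infty, 0]$ and represent the Mallows permutation as the a.s.\ unique stable matching of $K_{I, I}(p)$ with $p = 1 - q$. Then for $i \leq 0$, the position $i + \tfrac12$ is a cut iff no edge of the stable matching crosses $i + \tfrac12$, which holds iff the stable matching of the sub-bipartite graph on $\{i+1, \ldots, 0\}$ is perfect. By \cref{prop:finite}.ii this has probability $\prod_{j=1}^{-i}(1 - q^j)$, which is at least $\qPoch$. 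Moreover, conditional on such a cut, the stable matching on $I \cap (-\infty, i]$ is an independent fresh stable matching of $K_{I \cap (-\infty, i], I \cap (-\infty, i]}(p)$, since the edges on the two halves are independent and no blocking pair can arise across the cut. Consequently the positions of cuts form a renewal process.

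By the renewal property, stochastic domination by i.i.d.\ Bernoulli$(r)$ reduces to a uniform hazard-rate inequality $\alpha_k := \P(\tau = k \mid \tau \geq k) \geq r$ for every $k \geq 1$, where $\tau$ is the (shifted) position of the first cut below $0$. I would access $\alpha_k$ through the Markov chain $D_i := i - \min(\sigma(0), \sigma(-1), \ldots, \sigma(-i+1))$ on $\{0, 1, 2, \ldots\}$ arising from the top-down sequential construction; it satisfies $D_i = 0$ iff $-i + \tfrac12$ is a cut. The transitions are: from $d \geq 1$, $D \to d - 1$ with probability $1 - q^d$ and $D \to m$ with probability $(1-q)q^m$ for each $m \geq d$; from $d = 0$, $D \to m$ with probability $(1-q)q^m$ for each $m \geq 0$. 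Since $D$ can reach $0$ only from state $1$ (and then with probability $1 - q$), one obtains
\[
\alpha_k = (1 - q) \cdot \P\!\left(D_{k-1} = 1 \,\middle|\, D_j \geq 1 \text{ for all } 1 \leq j \leq k - 1\right),
\]
so it suffices to show that this conditional probability is at least $\qPoch$.

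The hard part is proving this last conditional-probability bound. I would attack it via a look-forward renewal estimate combined with the Markov property. Continue the chain freely for $N$ further steps beyond time $k-1$. Using the renewal theorem applied to the cut process and the marginal identity $\lim_i \P(D_i = 0) = \qPoch$, the expected number of visits to state $0$ during $[k, k-1+N]$ is $N \qPoch + o(N)$ uniformly in the state at time $k-1$. Each such visit is preceded one step earlier by a visit to state $1$, so the expected number of visits to state $1$ during $[k-1, k-2+N]$ is also at least $N \qPoch + o(N)$. Decomposing this expected count by the state of $D_{k-1}$ and invoking the Markov property to factor through the conditional distribution $\P(D_{k-1} = \cdot \mid D_j \geq 1 \text{ for all } 1 \leq j \leq k-1)$, the time-averaged visits-to-$1$ statistic forces this conditional distribution to place mass at least $\qPoch$ on state $1$. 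With the bound established, the standard uniform-coupling construction then produces i.i.d.\ Bernoulli$((1-q)\qPoch)$ variables pointwise dominated by the cut indicators, completing the proof.
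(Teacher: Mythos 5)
Your reduction to the interval $(-\infty,0]$, the renewal structure of the cut positions, and the reformulation as a uniform hazard-rate bound $\alpha_k\ge(1-q)\qPoch$ are all sound. Your chain $D_i$ is also a genuine Markov chain with the transitions you state (modulo a sign slip: you want $D_i=-(i-1)-\min(\sigma(0),\dots,\sigma(-i+1))$, so that $D_i=0$ exactly at cuts); the reason it is Markov is that exactly $i$ females in $[\min,0]$ are matched, so the number of unmatched females above the current minimum is determined by $D_i$ alone. The gap is in the final step, which is where all the content of the proposition lives. First, the claim that the expected number of visits to state $0$ during the next $N$ steps is $N\qPoch+o(N)$ \emph{uniformly in the state at time $k-1$} is false: from state $M$ the chain needs at least $M$ steps to reach $0$, so for $M\gg N$ this expected count is $0$. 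More fundamentally, even a correct bound of the form $\sum_{j=0}^{N-1}\P_\mu(D_{k-1+j}=1)\ge N\qPoch-o(N)$, where $\mu$ is the conditional law of $D_{k-1}$ given survival, carries no information about $\mu(\{1\})$ itself: the Ces\`aro average of $\P_\mu(D_j=1)$ converges to the stationary mass of state $1$ for essentially \emph{any} initial law $\mu$, including ones with $\mu(\{1\})=0$. In general, knowing that a renewal process has renewal density $\qPoch$ cannot bound the hazard rate of the inter-renewal time from below (a deterministic inter-renewal time has the right density and hazard rate $0$ at most times), so no argument using only the marginal identity $\lim_i\P(D_i=0)=\qPoch$ can yield the pointwise bound you need.

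The paper's route avoids this entirely by changing the revealment order. In the proof of \cref{prop:cutbernoullieasy} the relevant chain is $U_n$, the number of unmatched males among the top $n$ in a partial matching built by the ``alpha male'' algorithm, and the conditional probability of a cut at the next position \emph{given the entire history} equals $(1-q)\prod_{k=1}^{U_n}(1-q^k)\ge(1-q)\qPoch$, the product coming from \cref{prop:finite}.ii applied to the $U_n\times U_n$ block of fresh, unexplored edges. Because the hazard bound there holds pointwise in the history, no analysis of the chain conditioned on survival is required. To salvage your $D$-chain route you would need a genuinely pointwise or monotonicity argument for $\P(D_{k-1}=1\mid D_j\ge 1\ \forall\, 1\le j\le k-1)\ge\qPoch$; the time-average heuristic cannot supply it. Your treatment of the remaining cases (reflection for intervals bounded below, and passing to the limit for $I=\Z$ via the almost-sure construction of \cref{prop:limits}) matches the paper and is fine.
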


We note that if $\sigma$ is a Mallows permutation of $\Z$ then
\[
\P(i +\tfrac{1}{2} \text{ is a cut for $\sigma$}) = \qPoch
\]
for every $i \in \Z$; this is an immediate consequence of \cref{prop:finite} and \cref{thm:tamechar}, and is also an easy consequence of the $q$-shuffling algorithm for sampling the Mallows permutation \cite{gnedin2012two}. Thus, the density of the cuts of $\sigma$ and of the Bernoulli process that \cref{prop:cutbernoulli} states that they dominate differ by a factor of $1-q$.

\begin{figure}[t!]
\includegraphics[width=.24\textwidth]{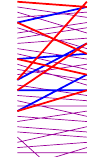}
\caption{Two matchings of equal flow agree at all sufficiently negative locations.  Here $p=0.3$, the two matchings have cuts at $-\tfrac12$ and $-\tfrac32$ respectively, and only negative integer locations are shown.  Edges unique to one or other matching are shown by thick blue and red lines respectively, while edges common to both matchings are shown by thin purple lines.  The symmetric difference (restricted to negative locations) consists of a single path with a unique locally minimal edge.
}
\vspace{-0.3cm}
\end{figure}

We now briefly discuss the proofs of \cref{prop:couplingbound} and \cref{prop:cutbernoulli}.
\cref{prop:finite} naturally leads to several algorithms for sequentially sampling the Mallows permutation, depending on the order in which we choose to reveal the status of the edges in $K_{I,I}(p)$. Different algorithms lend themselves to studying different aspects of the  permutation. For example, the \emph{diagonal exposure} algorithm of Gladkich and Peled \cite{gladkich2016cycle}, which is well suited to studying the cycle structure of the Mallows permutation, is of this form.
To prove \cref{prop:cutbernoulli}, we introduce a new algorithm for sequentially sampling the Mallows permutation that is well suited to studying cuts, and has a natural interpretation in terms of the matching. In this algorithm, an ``alpha'' male prevents less attractive males from finding partners until he himself finds one (at which point another male takes over as the alpha male).  A similar algorithm is implicit in the proof of \cite[Proposition 8.1]{MR3164758}.
 To prove \cref{prop:couplingbound}, we use a variation on this algorithm in which the two matchings $\sigma$ and $\sigma'$ are computed simultaneously.

%

\section{Intervals with maximally attractive individuals}

We now prove \cref{prop:finite}.  As advertised in the introduction, we will also obtain a new proof that the weak limit used to define the Mallows permutation of the infinite interval $(-\infty,0]$ exists.

\begin{proof}[Proof of \cref{prop:finite}]
  Let $I$ be an interval that is bounded from above.
  We may assume without loss of generality that $\max I = 0$.
  Fix a subgraph $G$ of $K_{I,I}$.
  In any stable matching of $G$, the most attractive male must be matched to the most attractive female that is compatible with him.
  Inductively, the $i^{th}$ most attractive male must be matched to the most attractive female he is compatible with among those who are not matched to a more attractive male.
  This shows that the stable matching is unique, and gives an algorithm to compute it.
  Formally, we set $\max\emptyset = -\infty$ and define $\sigma\colon I \to I \cup \{-\infty\}$ recursively by setting
  \[
  \sigma(0) = \max\{k\in I \colon (k,\female)\text{ is compatible with } (0,\male)\}
  \]
  and, for all $1 \leq i < |I|$,
  \begin{equation}
    \label{eq:sigmaalgorithm}
    \sigma(-i) = \max\Bigl(\{k \in I \colon (k,\female)\text{ is compatible with } (-i,\male)\} \setminus \sigma([1-i,0]) \Bigr),
  \end{equation}
  where we use the notation $\sigma(A) = \{\sigma(a) \colon a\in A\}$.
  It follows by induction on $i$ that $\sigma$ is the unique stable matching of $G$.


  Now suppose that
  $G=K_{I,I}(p)$. Then the probability that $\sigma(-i) \not= -\infty$ given $\sigma(0),\ldots,\sigma(-i+1)$ is equal to the probability that $(-i,\male)$ is compatible with $(k,\female)$ for some $k$ in the set
    \begin{equation*}
      A_i = I \setminus \sigma([1-i,0]).
    \end{equation*}
    If $I$ is infinite, then $A_i$ is infinite for every $i$, and we deduce that $\sigma(-i)\neq-\infty$ for every $i\geq0$ almost surely.
    Otherwise, $|I|=n$ for some $n\geq 0$.
    In this case, on the event that $\sigma(-j)\not=-\infty$ for all $0 \leq j<i$, the set $A_i$ has cardinality $n-i$. It follows that
    $$
      \P\bigl(\sigma(-i)\not=-\infty\mid \sigma(0)\not=-\infty,\ldots,\sigma(-i+1)\not=-\infty\bigr)=1-q^{n-i}.
    $$
    Thus, the probability that the unique stable matching is perfect is given by
    $$
    \P\bigl(\sigma(0)\not=-\infty,\ldots,\sigma(-n+1)\not=-\infty\bigr)
    = \prod_{i=0}^{n-1}(1-q^{n-i})
    = \prod_{i=1}^{n}(1-q^{i}).
    $$

  We next show that if $I$ is finite, then the conditional distribution of the unique stable matching $\sigma$ of $K_{I,I}(p)$ given that it is perfect is equal to $\Mal_q^I$.
  Fix a permutation $\tau$ of $I$. We wish to show that the probability that $\sigma=\tau$ is proportional to $q^{\inv(\tau)}$.
  By the recursive formula for $\sigma$ given above, we have that for all $1\leq i\leq n$,
  $$
  \P\bigl(\sigma(-i)=\tau(-i)\mid \sigma(0)=\tau(0),\ldots,\sigma(-i+1)=\tau(-i+1)\bigr)
  = (1-q) q^{\#\{j<-\tau(-i)\colon -\tau^{-1}(-j)>i\}}.
  $$
  Taking the product of these conditional probabilities and observing that
  $$
  \sum_{j=1}^n\#\{j<-\tau(-i)\colon -\tau^{-1}(-j)>i\}=\inv(\tau)
  $$
  yields that $\mathbb P(\sigma=\tau)=q^{\inv(\tau)}(1-q)^n$ as required. Note that this yields a proof of the well-known formula
  \[
    \sum_{\tau \in S_n} q^{\inv(\tau)} = \frac{\prod_{i=1}^n(1-q^i)}{(1-q)^n}.
  \]

  Now suppose that $I=(-\infty,0]$, and let $\sigma$ be the unique stable matching of $K_{I,I}(p)$, which is almost surely perfect.
  It remains to prove that $\sigma$ is the Mallows permutation of $I$ as defined by Gnedin and Olshanki \cite{gnedin2010q,gnedin2012two}.
  That is, we must prove that the law of $\sigma$ is equal to the weak limit of the Mallows measures on permutations of $I_n = [-n,0]$ as $n\to\infty$, i.e., that
  \begin{equation}
    \label{eq:MallowsdefI}
    \P\big(\sigma(i)=x_i \; \forall\, 1 \leq i \leq k\big)
    = \lim_{n\to\infty} \Mal_q^{I_n}(\sigma(i)=x_i\; \forall\, 1 \leq i \leq k)
  \end{equation}
  for every $k\geq 1$ and $x_1,\ldots,x_k\in I$.
  In fact, we will obtain as a corollary a new proof that this weak limit exists, recovering the result of  \cite{gnedin2010q}.
  For each $n\geq 0$, let $\sigma_n$ be the unique stable matching of $K_{I_n,I_n}(p)$ and observe that, by the above algorithm,
  \[
    \sigma_n(-i)= \begin{cases}
      \sigma(-i) & \text{ if } \sigma(-i) \geq -n\\
      -\infty & \text{ if } \sigma(-i) < -n.
    \end{cases}
  \]
  Observe that for every $n \geq 1$, every $k \leq n$ and every $x_1,\ldots,x_k \in I$, we have that, by a similar analysis to above,
  \begin{align*}
    \P\big(\sigma_n \text{ perfect} \mid \sigma(-i) =x_i \hspace{0.5em} &\forall\, 0\leq i\leq k\big)\\ &=
    \P\big(\sigma_n(-j) \neq -\infty\hspace{0.5em} \forall\, 0 \leq j \leq n \mid \sigma(-i) =x_i \hspace{0.5em} \forall\, 0\leq i \leq k\big)\\
    &=
    \mathbbm{1}\!\bigl[ x_i \in [-n,0] \hspace{0.5em} \forall\, 1\leq i \leq k \bigr] \prod_{i=1}^{k-n}(1-q^i).
  \end{align*}
  Thus, we have that
  \begin{multline*}
    \Mal_q^{I_n}(\sigma(-i)=x_i\; \forall\, 0 \leq i \leq k)
    = \P(\sigma(-i)=x_i \; \forall\, 0 \leq i \leq k \mid \sigma_n \text{ perfect}) \\
    =  \mathbbm{1}\!\bigl[x_i \in [-n,0] \hspace{0.5em} \forall\, 0\leq i\leq k\bigr] \P(\sigma(i)=x_i \; \forall\, 1 \leq i \leq k) \frac{\prod_{i=1}^{n-k}(1-q^i)}{\prod_{i=1}^{n+1}(1-q^i)}.
  \end{multline*}
  The ratio of products at the end of the right-hand side tends to one as $n\to\infty$ when $k$ is fixed, and so
  we obtain that \eqref{eq:MallowsdefI} holds as desired.
\end{proof}


\begin{remark}
  The proof of convergence shows that the restriction of $\sigma_n$ to $[-k,0]$ is close to the infinite Mallows permutation not just for fixed $k$ as needed, but even if $k,n\to\infty$ jointly, as long as $n-k\to\infty$.
\end{remark}

\section{Cuts, coupling, and the existence of the tame stable matchings}

In this section we prove \cref{prop:couplingbound}, below, which generalizes \eqref{eq:couplingintro} from the introduction.
We then use this proposition to prove the existential claims from \cref{thm:tamechar}.
We begin by proving a special case of \cref{prop:cutbernoulli}, applying to intervals that are bounded from above. \cref{prop:cutbernoulli} will later follow by an easy limiting argument.
Besides being of independent interest, the proof of \cref{prop:cutbernoullieasy} will serve as a warm-up to the proof of \cref{prop:couplingbound}.
We will then apply \cref{prop:couplingbound} to prove \cref{prop:limits} and \cref{cor:Mallowsaretame}, which establish the existential claims of \cref{thm:tamechar}.
\cref{prop:couplingbound} will also be used in the following section to prove \cref{thm:tameorwild} and complete the proof of \cref{thm:tamechar}.

\begin{prop}
  \label{prop:cutbernoullieasy}
  Let $q\in [0,1)$, let $I$ be an infinite interval that is bounded from above, and let $\sigma$ be a random permutation of $I$ drawn from the Mallows distribution with parameter $q$. Then the process
  \[
    \Bigl(\mathbbm{1}\!\big[i +\tfrac{1}{2}  \text{ is a cut for $\sigma$}\big]\Bigr)_{i \in I}
  \]
  stochastically dominates an i.i.d.\ Bernoulli process with parameter $(1-q)\qPoch$.
\end{prop}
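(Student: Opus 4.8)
The plan is to reduce to the interval $I=(-\infty,0]$ (every infinite interval bounded above is a shift of this one, and the Mallows measure is defined by shifting the index) and then to exploit the description of the Mallows permutation furnished by \cref{prop:finite}. By that proposition $\sigma$ has the law of the almost surely unique perfect stable matching of $K_{I,I}(p)$ with $p=1-q$, which is produced by the greedy ``alpha-male'' exposure: revealing males from the top downwards, the most attractive unmatched male claims the most attractive compatible available female. Because the compatibilities of each male are examined only when he is processed and are fresh i.i.d.\ Bernoulli$(1-q)$, the male at height $-j$ claims the $G_j$-th most attractive currently available female, where the $G_j$ are independent Geometric$(1-q)$ variables. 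In this language a cut is easy to read off: $-k+\tfrac12$ is a cut if and only if the top $k$ males claim exactly the top $k$ females, equivalently $L_+(\sigma,-k+\tfrac12)=0$ (no edge crosses $-k+\tfrac12$).

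Next I would isolate the regeneration structure. If $-m+\tfrac12$ is a cut then each of the top $m$ males completed his search within the top $m$ females, so the cut event and the entire matching above $-m+\tfrac12$ are measurable with respect to the compatibilities among the top $m$ males and the top $m$ females alone. Consequently the matching of the individuals at heights $\le -m$ depends only on the (independent, fresh) compatibilities among them, and is therefore a Mallows permutation of $(-\infty,-m]$ that is independent of everything above the cut. Hence the cut indicators form an ordinary renewal process with origin at the top, and
$$
  \P\!\left(-k+\tfrac12 \text{ is a cut} \;\middle|\; \mathbbm 1[-j+\tfrac12 \text{ is a cut}],\ 1\le j\le k-1\right)=h(a),
$$
where $a$ is the number of levels since the most recent cut and $h$ is the discrete hazard of the gap distribution. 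By the standard sequential (Strassen-type) criterion for stochastic domination by an i.i.d.\ Bernoulli process, ordering $I$ from the top, it then suffices to prove the pointwise bound $h(a)\ge (1-q)\qPoch$ for every $a\ge 1$.

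The heart of the argument, and the step I expect to be the main obstacle, is this uniform hazard bound. Writing $D_k=L_+(\sigma,-k+\tfrac12)$ for the overhang, we have $h(a)=\P(D_a=0\mid D_1,\dots,D_{a-1}\ge 1)$, and the difficulty is that a large overhang temporarily \emph{precludes} an immediate cut: if two or more top males have already reached below the frontier, no cut can occur at the next level. Thus $h$ is genuinely smaller than the marginal cut density, and one must control the conditional law of the overhang under the event that no cut has yet occurred. The intended mechanism for the constant is a lossy factorisation read off from the alpha-male exposure at the moment the frontier reaches the relevant height: heuristically, the factor $1-q$ is the single fresh compatibility by which the current alpha closes his search exactly at the frontier, clearing the overhang, while the factor $\qPoch$ is a lower bound for the probability that the residual problem faced by the individuals strictly below the frontier --- an independent copy of the model --- itself begins with a cut, marginal cut probabilities being bounded below by $\qPoch$ via \cref{prop:finite}. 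The delicate point is to arrange the exposure so that these two sources of randomness are genuinely independent of, and unconstrained by, the conditioning $\{D_1,\dots,D_{a-1}\ge 1\}$; making this rigorous uniformly in $a$ (equivalently, uniformly in the overhang) is where the real work lies.

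Finally, granting $h(a)\ge(1-q)\qPoch$, the sequential criterion yields that the cut-indicator process on $(-\infty,0]$ dominates an i.i.d.\ Bernoulli$\bigl((1-q)\qPoch\bigr)$ process, and undoing the initial shift gives the statement for an arbitrary infinite interval bounded above. (The two-sided statement of \cref{prop:cutbernoulli} then follows by a limiting argument, which is not needed here.)
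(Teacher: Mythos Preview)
Your overall architecture is right and matches the paper's: reduce to $I=(-\infty,0]$, realise $\sigma$ as the unique stable matching via \cref{prop:finite}, and establish domination by exhibiting a uniform conditional lower bound of $(1-q)\qPoch$ on the probability of a cut at the next level. But the proposal stops exactly where the content is, and your heuristic for the constant points in the wrong direction.

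The $\qPoch$ factor does \emph{not} come from ``the residual problem strictly below the frontier beginning with a cut''. When the overhang exceeds one, the alpha male matching the newly exposed female does not by itself clear the overhang: there remain other unmatched males among the top $n$ levels, and for $-n-\tfrac32$ to be a cut \emph{all} of them must find partners among the currently available females in the top $n{+}1$ levels. The correct decomposition, which is what the paper carries out, is: (i) with fresh probability $1-q$ the alpha male is compatible with the newly revealed female $(-n{-}1,\female)$ and takes her; (ii) conditionally on this, the remaining $U_n$ unmatched males and $U_n$ unmatched females in the top $n{+}1$ levels have had \emph{no} mutual compatibility information revealed, so by (a re-indexed) \cref{prop:finite} their stable matching is perfect with probability $\prod_{k=1}^{U_n}(1-q^k)\ge\qPoch$. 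This yields the exact identity
\[
\P\bigl(-n-\tfrac32\text{ is a cut for }\sigma \,\bigm|\, \sigma_n\bigr)=(1-q)\prod_{k=1}^{U_n}(1-q^k),
\]
and the bound follows immediately. Nothing below the frontier enters.

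The second point is that the paper does \emph{not} condition on the cut indicators and analyse the gap hazard $h(a)$. It conditions on the full partial matching $\sigma_n$ (recording which of the top males are matched, and to whom, up to the first male whose partner lies below level $-n$). This filtration is strictly finer than the one generated by the cut indicators, so a uniform lower bound there is stronger; but the real payoff is that it makes the conditional computation exact. Given $\sigma_n$, the only surviving constraint is that the current alpha male is incompatible with each presently unmatched female in $[-n,-1]$; every other relevant compatibility is still fresh Bernoulli$(1-q)$. Attempting instead to condition only on $\{D_1,\dots,D_{a-1}\ge1\}$ forces you to control the conditional law of $U_{a-1}$ under that event, which is exactly the obstacle you flag and never resolve. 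Conditioning on $\sigma_n$ sidesteps it entirely.
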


Before beginning the proof of \cref{prop:cutbernoullieasy}, we note that, by re-indexing, \cref{prop:finite} also implies that for every $p\in (0,1]$ and every two non-empty sets $A,B \subseteq \Z$ with $|A|=|B|$, both of which are bounded above, there is a unique stable matching of $K_{A,B}(p)$, which we denote by $\sigma_{A,B}$. Moreover, again by re-indexing and applying \cref{prop:finite}, the stable matching $\sigma_{A,B}$ is perfect almost surely if $A$ is infinite, and with probability $\prod_{i=1}^{|A|}(1-q^i)$ if $A$ is finite.

\begin{proof}
  We assume without loss of generality that $I=(-\infty,-1]$. We define a sequence of partial matchings $\sigma_n : (-\infty,-1] \to [-n,-1] \cup \{-\infty\}$, $n \geq1$, by letting
  \[
    \sigma_n(-i) = \begin{cases}
      \sigma(-i) & \text{ if $i \leq n$ and $-\sigma(-j) \leq n$ for all $ 1\leq j \leq i$}\\
      -\infty & \text{ otherwise}.
    \end{cases}
  \]
  It suffices to prove that
  \[
    \P\left( -n - \tfrac{3}{2} \text{ is a cut for $\sigma$} \mid \sigma_n \right) \geq (1-q)\qPoch.
  \]
  In fact we will prove the equality
  \begin{equation}
    \label{eq:conditional}
    \P\left( -n - \tfrac{3}{2} \text{ is a cut for $\sigma$} \mid \sigma_n \right) = (1-q)\prod_{k=1}^{U_n}(1-q^k),
  \end{equation}
  where $U_n$ is the number of males who are among the $n$ most attractive (i.e., are in $[-n,-1]\times\{\male\}$) and are unmatched in the partial matching $\sigma_n$.
  Note that $-n-\tfrac{3}{2}$ is a cut for $\sigma$ if and only if $U_{n+1}=0$.

 First, the most attractive male queries his compatibility with each of the $n$ most attractive females, i.e., each female in $[-n,-1]\times\{\female\}$.  If he finds he is not compatible with any of them, then we stop the procedure and do not match anyone. Otherwise, he is matched to the most attractive of these females with whom he is compatible. In this case, the second most attractive male queries his compatibility with each of the $n$ most attractive females. If he is not compatible with any of these females other than the one that is already matched, we stop the procedure and do not match any males other than the most attractive one. Otherwise, we match the second most attractive male to the most attractive of these females with whom he is compatible and who is not already matched. We continue this procedure recursively, finding matches for the males in order of attractiveness  until we reach a male who cannot be matched, at which point we stop.
  Let $\cF_n$ be the $\sigma$-algebra generated by all the information concerning compatibility that is revealed when computing $\sigma_n$ via this procedure.

  Now suppose that we wish to compute $\sigma_{n+1}$, given $\sigma_n$ and the $\sigma$-algebra $\cF_n$.
  We know that the most attractive male who is unmatched in $\sigma_n$ is not compatible with any of the females in $[-n,-1]\times\{\female\}$ who are unmatched in $\sigma_n$.
  Other than this, the only information we have about compatibility concerns pairs of males and females at least one of whom is already matched in $\sigma_n$, and this information is no longer relevant for computing $\sigma_{n+1}$.

  Thus, to compute $\sigma_{n+1}$, we use the following procedure, illustrated in \cref{fig:next_man}.
  First, the most attractive male who is unmatched in $\sigma_n$ queries his compatibility with $(-n-1,\female)$, who is always unmatched in $\sigma_n$.
  If he finds he is not compatible with her, we set $\sigma_{n+1}=\sigma_n$ and stop.
  Otherwise, he finds he is compatible with her.
  This occurs with probability $p=1-q$ independently of everything that has happened previously.
  If this is the case, we then try to match the remaining $U_n$ unmatched males with the remaining $U_n$ unmatched females.
  Since no information concerning compatibility between any of these individuals has been revealed, we can re-index and apply  \cref{lem:finite_sets} to deduce that the conditional probability that the stable matching between them is perfect is equal to $\prod_{k=1}^{|U_n|}(1-q^k)$. It follows that
  \begin{equation}
    \P\left( -n - \tfrac{3}{2} \text{ is a cut for $\sigma$} \mid \sigma_n,\, \cF_n \right) = (1-q)\prod_{k=1}^{U_n}(1-q^k),
  \end{equation}
  and the equality \eqref{eq:conditional} follows by taking the conditional expectation over $\cF_n$ given $\sigma_n$.
\end{proof}

  \begin{figure}
    \includegraphics[width=.19\textwidth]{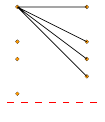}
    \hfill\includegraphics[width=.19\textwidth]{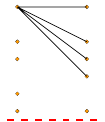}
    \hfill\includegraphics[width=.19\textwidth]{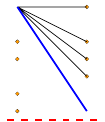}
    \hfill\includegraphics[width=.19\textwidth]{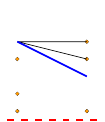}
    \hfill\includegraphics[width=.19\textwidth]{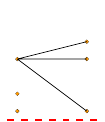}
    \caption{Constructing the matching $\sigma_{n+1}$ from $\sigma_{n}$.
      (i) 4 males and 4 females are unmatched (orange dots). It is known that the most attractive unmatched ``alpha'' male is incompatible (shown as thin black lines) with all unmatched females.
      (ii) A new male and female are revealed.
      (iii) The alpha male is found to be compatible with the new female, so they are matched and removed from consideration.
      (iv) The next most attractive unmatched male is found to be incompatible with the first and second most attractive females but compatible with the third, so they are matched and removed from consideration.
      (v) The next most attractive unmatched male is found to be incompatible with all unmatched females.  He becomes the new alpha male and the step ends.
      Regardless of the initial state, the probability that the step ends with all the individuals matched is at least $(1-q)(q)_\infty$.}
    \vspace{-0.75em}
    \label{fig:next_man}
  \end{figure}

\begin{remark}
  In the sampling algorithm used above, the information concerning compatibilities that is revealed when computing $\sigma_{n+1}$ given $\sigma_n$ and $\cF_n$ is precisely $\cF_{n+1}$.
  In fact, $(U_n)_{n\geq 0}$ is a Markov chain with the filtration $\cF_n$, and \eqref{eq:conditional} gives the probability of jumping to $0$.
  The associated Markov chain is positive recurrent by the above proposition, and a stationary $\Z$-indexed Markov process with the same transition rule can be used to sample the Mallows permutation of $\Z$.
\end{remark}

We now come to the main technical result of this section.
We say that a set $A \subseteq \Z$ is \textbf{low} if it is bounded from above and its complement $\Z\setminus A$ is bounded from below (equivalently, if its symmetric difference with $(-\infty,0]$ is finite).
For each low set $A$, we define $r(A)$ to be the largest integer such that $(-\infty,r] \subseteq A$. In particular, if $\sigma$ is a locally finite matching then the set $\sigma((-\infty,n]) = \{\sigma(i) : i \leq n\}$ is low for each $n\in \Z$, with
\[
  r\bigl(\{\sigma(i) : i \leq n\}\bigr) =  \min_{i>n} \sigma(i) -1.
\]
By re-indexing, it follows from \cref{prop:finite} that for any two low sets $A,B \subseteq \Z$, there is almost surely a unique stable matching of $K_{A,B}(p)$, which we denote $\sigma_{A,B}$, and which is perfect almost surely. Since there are only countably many low sets, this holds for all low sets simultaneously almost surely.
We say that a pair of low sets $A$ and $B$ is \textbf{balanced} if $|A\setminus B|=|B \setminus A|$. Note this condition is equivalent to $|A \cap [-n,\infty)|=|B \cap [-n,\infty)|$ for all sufficiently large $n$, or indeed for all $n \geq r(A)\wedge r(B)$.

\begin{prop}\label{prop:couplingbound}
  Let $p=1-q\in(0,1]$ and consider $K_{\m Z,\m Z}(p)$.
  For $\ell\in\{1,2\}$ let $(A_\ell,B_\ell)$ be a pair of balanced low subsets of $\Z$, set $r_\ell=r(A_\ell)\wedge r(B_\ell)$, and set $\sigma_\ell = \sigma_{A_\ell,B_\ell}$, the unique stable matching of $K_{A_\ell,B_\ell}(p)$.
  Then
  \[
    \P\bigl(\sigma_1(-i) = \sigma_2(-i) \hspace{0.5em} \forall i \geq \max(n -r_1,n-r_2)\bigr) \geq 1 - \Big[ 1-(1-q)\qPoch^2\Big]^{n}
  \]
  for every $n\geq 0$.
  In particular, 
  $\sigma_1(-i) = \sigma_2(-i)$ for all sufficiently large $i$ almost surely.
\end{prop}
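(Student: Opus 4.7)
The plan is to run a simultaneous variant of the alpha-male exploration from \cref{prop:cutbernoullieasy}, constructing both matchings $\sigma_1$ and $\sigma_2$ jointly from a single realization of $K_{\Z,\Z}(p)$. After a preliminary re-indexing I may assume that all four low sets $A_1, B_1, A_2, B_2$ contain $(-\infty, -R]$ for some finite $R$ determined by $r_1, r_2$, so that in the ``low regime'' below $-R$ they coincide. The procedure exposes males, females, and edges one level at a time from the top down, maintaining partial matchings $\sigma_1^n$ and $\sigma_2^n$, unmatched-male sets $U_1^n$ and $U_2^n$, and the filtration $\mathcal F_n$ generated by all compatibilities tested so far.

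Call step $n$ a \emph{mutual closure} if, upon exposing level $n+1$ within the low regime, both matchings become completely matched: $U_1^{n+1} = U_2^{n+1} = \emptyset$. At any mutual closure, the residual sub-problems below level $n+1$ have identical vertex sets and use previously unexamined edges of $K_{\Z,\Z}(p)$, so by the inductive construction of the unique stable matching in \cref{prop:finite}, $\sigma_1(-i) = \sigma_2(-i)$ for every $i \geq n+1$. Consequently the proposition reduces to proving the uniform conditional lower bound $\P(\text{step } n \text{ is a mutual closure} \mid \mathcal F_n) \geq (1-q)\qPoch^2$: the indicators then stochastically dominate an i.i.d.\ Bernoulli$((1-q)\qPoch^2)$ process, and the failure probability is at most $[1-(1-q)\qPoch^2]^n$.

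The conditional bound itself is established by decomposing the mutual closure event: (a) the freshly revealed female at the new level is compatible with the alpha male of $\sigma_1$, a fresh Bernoulli$(1-q)$ event on a previously untested edge; (b) conditional on (a), the remaining $|U_1^n|-1$ males pair perfectly with the remaining unmatched females of $\sigma_1$ using previously untested edges, contributing a factor at least $\qPoch$ by \cref{prop:finite}; (c) the analogous closure for $\sigma_2$ contributes another factor at least $\qPoch$ using a further disjoint pool of fresh edges. The main obstacle is showing that the $(1-q)$ Bernoulli trial from (a) can be \emph{shared} between $\sigma_1$ and $\sigma_2$, rather than paying a second $(1-q)$ for the alpha male of $\sigma_2$, which would yield only the weaker bound $(1-q)^2\qPoch^2$. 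This is handled by ordering the exploration so that the new female is tested against the alpha males of both $\sigma_1$ and $\sigma_2$ via overlapping events: in the case that the two alphas coincide the sharing is immediate, and otherwise a case analysis, arranged so that the auxiliary alpha-revelations are absorbed into the same $\qPoch$ factors, recovers the claimed bound. Once $\P(\text{mutual closure} \mid \mathcal F_n) \geq (1-q)\qPoch^2$ is established, the quantitative statement of the proposition and the almost-sure eventual agreement both follow immediately.
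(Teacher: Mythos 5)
Your overall strategy is the same as the paper's: reduce the proposition to showing that mutual cuts occur, via a simultaneous top-down alpha-male exploration of both matchings, and then establish the uniform conditional bound $(1-q)\qPoch^2$ for a mutual closure at each level. The reduction itself (a mutual cut at $-i+\tfrac12$ forces agreement below it by uniqueness of the stable matching on $(-\infty,-i]$) is correct, and your concern about sharing the single $(1-q)$ factor between the two alpha males is a real issue that the paper resolves exactly as you suggest: the most attractive male unmatched in \emph{either} matching queries the new female, and any leftover alpha is absorbed into the residual products, which is why the bound in the paper involves $\prod_{k=1}^{U_{\ell,n}+1}(1-q^k)$ rather than $\prod_{k=1}^{U_{\ell,n}}(1-q^k)$.

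However, step (c) of your decomposition contains a genuine gap. You assert that the residual closure for $\sigma_2$ uses ``a further disjoint pool of fresh edges'' and therefore contributes an independent factor of $\qPoch$. This is false: the sets $A_1,B_1$ and $A_2,B_2$ all contain a common half-line, so the unmatched males and females remaining in the two residual sub-problems overlap, and the two perfection events are determined by overlapping collections of edge variables. They are not independent, so you cannot simply multiply the two probabilities. What is actually needed is a positive-correlation inequality: for finite non-empty sets with $|A|=|B|$ and $|A'|=|B'|$, one has $\P(\sigma_{A,B}\text{ and }\sigma_{A',B'}\text{ both perfect})\geq \prod_{k=1}^{|A|}(1-q^k)\prod_{k=1}^{|A'|}(1-q^k)$. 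This is the paper's \cref{lem:finite_sets}, proved by induction on $|A|+|A'|$ with a case analysis on whether the most attractive male lies in one or both of $A,A'$, using the inequality $1-q^{|B|}-q^{|B'|}+q^{|B\cup B'|}\geq(1-q^{|B|})(1-q^{|B'|})$ in the shared case. Note that the perfection events are not monotone in the compatibility graph, so this correlation inequality does not follow from FKG; it requires its own argument, which is the missing ingredient in your proposal.
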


We remark that one may deduce a similar result (with a worse constant) from \cref{prop:cutbernoulli} via a finite-energy argument.

Our proof of \cref{prop:couplingbound} will use the following simple correlation inequality.

\begin{lemma}
  \label{lem:finite_sets}
Let $p=1-q\in (0,1]$. If $A,B,A',B' \subseteq \Z$ are finite and non-empty with $|A|=|B|$ and $|A'|=|B'|$, then
  \begin{align*}
    \P(\sigma_{A,B} \text{ and } \sigma_{A',B'} \text{ are both perfect})  &\geq \prod_{k=1}^{|A|} (1-q^k)\prod_{k=1}^{|A'|} (1-q^k).
  \end{align*}
\end{lemma}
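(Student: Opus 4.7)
The plan is to induct on $n+n':=|A|+|A'|$, having first extended the statement to allow $A$ or $A'$ to be empty by declaring the empty matching vacuously perfect and setting the empty product to $1$. In these degenerate cases the claim reduces to \cref{prop:finite}.ii (after re-indexing), which supplies the base case.

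For the inductive step with $n,n'\geq 1$, let $m$ denote the most attractive male in $A\cup A'$. Recall from the proof of \cref{prop:finite} that $\sigma_{A,B}$ is produced by the greedy algorithm in which the most attractive male in $A$ is first matched to his most preferred compatible female $f^*\in B$ (if any exists), and the rest of the matching forms the unique stable matching on the reduced pair $(A\setminus\{m\},B\setminus\{f^*\})$; similarly for $\sigma_{A',B'}$ and $f^{**}\in B'$. I will distinguish three cases based on whether $m$ lies in $A\setminus A'$, $A'\setminus A$, or $A\cap A'$.

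If $m\in A\setminus A'$, then $\{\sigma_{A,B}\text{ perfect}\} = \{m\text{ has some compatible female in }B\}\cap\{\sigma_{A\setminus\{m\},B\setminus\{f^*\}}\text{ perfect}\}$, while $\{\sigma_{A',B'}\text{ perfect}\}$ depends only on the edges of $K_{A',B'}$, which do not touch $m$. Partitioning on the value of $f^*$, the edges not incident to $m$ remain i.i.d.\ Bernoulli conditionally on the edges from $m$, so the inductive hypothesis at sizes $(n-1,n')$ applies uniformly in $f^*$, and summing against $\sum_{f^*}\P(m\text{ matches }f^*)=1-q^n$ yields the target inequality via the identity $(1-q^n)\prod_{k=1}^{n-1}(1-q^k)=\prod_{k=1}^{n}(1-q^k)$. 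The case $m\in A'\setminus A$ is symmetric.

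The main case is $m\in A\cap A'$. Here $m$'s two matches $f^*\in B$ and $f^{**}\in B'$ are determined by the (possibly overlapping) edges from $m$ to $B$ and $B'$. Conditioning on all edges incident to $m$, and hence on $(f^*,f^{**})$, the other edges of $K_{\Z,\Z}(p)$ remain i.i.d.\ Bernoulli, so the reduced matchings $\sigma_{A\setminus\{m\},B\setminus\{f^*\}}$ and $\sigma_{A'\setminus\{m\},B'\setminus\{f^{**}\}}$ have their usual joint distribution, and the inductive hypothesis at sizes $(n-1,n'-1)$ supplies a lower bound uniform in $(f^*,f^{**})$. The remaining ingredient is
\[
\P\bigl(m\text{ is compatible with some female in }B\text{ and with some female in }B'\bigr)\geq (1-q^n)(1-q^{n'}),
\]
which follows from the Harris--FKG inequality applied to the i.i.d.\ Bernoulli family $\{(m,f):f\in\Z\}$, since both events are increasing in this family. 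I expect this last case to be the main obstacle: because the event $\{\sigma_{A,B}\text{ perfect}\}$ is neither monotone increasing nor decreasing in the edges of $K_{\Z,\Z}(p)$, a direct FKG argument on perfectness is unavailable, and the positive association must instead be extracted one male at a time, using monotonicity only of the per-step event ``$m$ has a compatible female''.
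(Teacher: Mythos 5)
Your proof is correct and follows essentially the same route as the paper: induction on $|A|+|A'|$, peeling off the most attractive male of $A\cup A'$ and splitting into cases according to whether he lies in one or both of the sets. The only cosmetic difference is that you justify $\P(m\text{ has a compatible female in }B\text{ and in }B')\geq(1-q^{|B|})(1-q^{|B'|})$ via Harris--FKG on the edges incident to $m$, whereas the paper computes this probability exactly as $1-q^{|B|}-q^{|B'|}+q^{|B\cup B'|}$ and checks the inequality directly; both are one-line arguments and your observation about why FKG cannot be applied to the perfectness events themselves matches the paper's own remark.
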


In other words, the perfection of the stable matching of different pairs of sets are positively correlated events.
Note that these events are not increasing with respect to the compatibility graph, so that the claimed positive correlation does \emph{not} follow from the FKG inequality.

\begin{proof} We prove the claim by induction on $|A|+|A'|$. The cases in which either $|A|=1$ or $|A'|=1$, and in particular the case $|A|+|A'|=2$, are trivial.
Thus, suppose that $|A|,|A'|\geq 2$, and that the claim has been proven for all pairs of pairs of finite non-empty sets $\tilde A, \tilde B, \tilde A', \tilde B' \subseteq \Z$ with $|\tilde A|=|\tilde B|$,  and $|\tilde A'|=| \tilde B'|$, and $|\tilde A|+|\tilde A'|< |A|+|A'|$. Let $i=\max A \cup A'$, so that $(i,\male)$ is the most attractive male in $(A\cup A')\times\{\male\}$.

  First suppose  that $i$ is in exactly one of $A$ or $A'$; without loss of generality we may assume that $i\in A\setminus A'$.
  As in the proof of \cref{prop:finite},  the male $(i,\male)$ must be matched in $\sigma_{A,B}$ to the most attractive female he is compatible with in $B\times\{\female\}$, and the probability that there is at least one such compatible female is $1-q^{|A|}$. Note that on the event that $(i,\male)$ is matched to $(j,\female)$ in $\sigma_{A,B}$ we have that $\sigma_{A,B}(k)=\sigma_{A\setminus\{i\}, B\setminus\{j\}}(k)$ for every $k\in A\setminus\{i\}$. Note also that the only information required to compute $\sigma_{A,B}(i)$ concerns compatibility information between $(i,\male)$ and $\Z\times\{\female\}$, and that, given $\sigma_{A,B}(i)=j$, this information is no longer relevant for computing either $\sigma_{A\setminus\{i\}, B\setminus\{j\}}$ or $\sigma_{A',B'}$. Thus, it follows by the induction hypothesis that for every $j\in B$,
  \begin{align*}
 \P\left(\sigma_{A,B} \text{ and } \sigma_{A',B'} \text{ are both perfect} \mid \sigma_{A,B}(i) =j \right) &= \P\left(\sigma_{A \setminus \{j\}, B\setminus \{i\}} \text{ and } \sigma_{A',B'} \text{ are both perfect} \right)\\ &\geq \prod_{k=1}^{|A|-1}(1-q^k)\prod_{k=1}^{|A'|} (1-q^k).
  \end{align*}
  The result now follows since $\P(\sigma_{A,B}(i)=j$ for some $j \in B)=1-q^{|A|}$.

Now suppose that $i \in A \cap A'$.
  In order for $\sigma_{A,B}$ and $\sigma_{A',B'}$  both to be perfect,  $(i,\male)$ must be compatible with both a female from $B\times \{\female\}$ and a female from $B'\times \{\female\}$, with these two females possibly being the same.
  The probability of the required females existing is
  \begin{equation}
  \label{eq:qBFKG}
    1 - q^{|B|} - q^{|B'|} + q^{|B\cup B'|} \geq (1-q^{|B|})(1-q^{|B'|}).
  \end{equation}
  Arguing similarly to the previous case, we have that
  \begin{multline*}
\P\left(\sigma_{A,B} \text{ and } \sigma_{A',B'} \text{ are both perfect} \mid \sigma_{A,B}(i) =j, \sigma_{A',B'}(i) =j'   \right) \\= \P\left(\sigma_{A \setminus \{j\}, B\setminus \{i\}} \text{ and } \sigma_{A' \setminus \{i\},B'\setminus \{j'\}} \text{ are both perfect} \right)
  \end{multline*}
  for every $j\in B$ and $j'\in B'$ such that the event being conditioned on has positive probability, and the claim follows from the induction hypothesis together with \eqref{eq:qBFKG}.
\end{proof}

We remark that
  the same argument also yields analogous inequalities for more than two pairs.

\begin{proof}[Proof of \cref{prop:couplingbound}]
  We may assume without loss of generality that $r_1\wedge r_2=0$. The proof is an elaboration of the proof of \cref{prop:cutbernoullieasy}, and we omit some minor details.
  First, observe that if $i\geq 0$ and $-i+\tfrac{1}{2}$ is a cut for both $\sigma_1$ and $\sigma_2$, then the restrictions of $\sigma_1$ and $\sigma_2$ to $(-\infty,-i]$ both define stable matchings of $K_{(-\infty,-i],(-\infty,-i]}(p)$, and hence are equal by \cref{prop:finite}. Thus, it suffices to prove that
  \[
    \P\bigl(\exists i \in [1,n] \text{ such that $-i+\tfrac{1}{2}$ is a cut for both $\sigma_1$ and $\sigma_2$}\bigr)
    \geq 1 -\Big[1-(1-q)\qPoch^2\Big]^n
  \]
  for all $n \geq 0$.

  We perform a similar sampling procedure to that used in the proof of \cref{prop:cutbernoulli}.
  For both $\ell\in \{1,2\}$, we define a sequence of partial matchings $\sigma_{\ell,n} : A_\ell \to B_\ell \cap [-n,\infty) \cup \{-\infty\}$ for $n \geq -1$ by setting $\sigma_{\ell,n}(i)=\sigma_\ell(i)$ if the following three conditions hold for $i\in A_\ell$, and otherwise $\sigma_{\ell,n}(i)=-\infty$.
  \begin{itemize}
    \item $i\geq -n$
    \item $\sigma_\ell(j) \geq -n$ for all $j\in A_\ell \cap [i,\infty)$
    \item $\sigma_{3-\ell}(j) \geq -n$ for all $j\in A_{3-\ell} \cap (i,\infty)$
  \end{itemize}
  It suffices to prove that
  \[
    \P\left( -n - \tfrac{3}{2} \text{ is a cut for both $\sigma_1$ and $\sigma_2$} \mid \sigma_{1,n},\sigma_{2,n} \right) \geq (1-q)\qPoch^2
  \]
  for every $n\geq -1$.  We will prove the stronger bound
  \begin{equation}
    \label{eq:couplingdesired}
    \P\left( - n - \tfrac{3}{2} \text{ is a cut for both $\sigma_1$ and $\sigma_2$} \mid \sigma_{1,n},\sigma_{2,n} \right) \geq (1-q)\prod_{k=1}^{U_{1,n}+1}(1-q^k)\prod_{k=1}^{U_{2,n}+1}(1-q^k)
  \end{equation}
  where $U_{\ell,n}$ is the number of males in $([-n,\infty)\cap A_1)\times \{\male\}$ that are unmatched in $\sigma_{\ell,n}$.
  (Unlike in the proof of Proposition \ref{prop:cutbernoulli}, this is not an equality in general.)


  Similarly to the proof of \cref{prop:cutbernoulli}, we try to match the males in $(A_1\cup A_2)\times \{\male\}$ in order of attractiveness.
  At each step, we may need to find them a match in either $\sigma_{1,n}$, $\sigma_{2,n}$, or both, according to whether they are in $A_1 \times \{\male\}$, $A_2\times\{\male\}$, or both.
  If we reach a male for whom we cannot find both the required matches, we stop.
  If that male can be matched in one of $\sigma_{1,n},\sigma_{2,n}$ but not the other, we make the single match, and stop.
  During this procedure, the information revealed can be described as follows:
  \begin{itemize}
  \item Consider the most attractive male in  $(A_1\cup A_2) \times \{\male\}$ that is unmatched either in $\sigma_{1,n}$, in $\sigma_{2,n}$, or in both.
    For $\ell=1$ or $2$, if this male is unmatched in $\sigma_{\ell,n}$, then we know that he is not compatible with any female in $([-n,\infty) \cap B_\ell)\times \{\female\}$ who is unmatched in $\sigma_{\ell,n}$.
  \item Other than this, the only information we have about compatibility concerns pairs of males and females
    such that for each $\ell=1,2$, at least one of the pair is either already matched in $\sigma_{\ell,n}$ or is not in the set $(A_\ell \times\{\male\}) \cup (B_\ell \times \{\female\})$. The status of these edges is no longer relevant for computing $\sigma_{1,n+1}$ and $\sigma_{2,n+1}$.
  \end{itemize}

  Thus, to compute $\sigma_{1,n+1}$ and $\sigma_{2,n+1}$ given $\sigma_{1,n}$ and $\sigma_{2,n}$, we may do the following.
  First, the most attractive male who is unmatched in either $\sigma_{1,n}$ or $\sigma_{2,n}$ queries his compatibility with $(-n-1,\female)$.
  If he finds he is not compatible with her, we set $\sigma_{1,n+1}=\sigma_{1,n}$ and $\sigma_{2,n+1}=\sigma_{2,n}$ and stop.
  Otherwise, he finds he is compatible with her. This occurs with probability $p=1-q$ independently of everything that has happened previously.
  If this is the case, we match him to her in whichever of the matchings $\sigma_{1,n}$ and/or $\sigma_{2,n}$ that he was previously unmatched in. Call these updated matchings $\sigma'_{1,n}$ and $\sigma'_{2,n}$.

  At this point, the number of remaining unmatched males (and females) in $\sigma'_{\ell,n}$ is either $U_{\ell,n}$ or $U_{\ell,n}+1$, and no information about their compatibility has been revealed so far. Thus, by \cref{lem:finite_sets}, the conditional probability that all the unmatched individuals in $\sigma'_{1,n}$ and in $\sigma'_{2,n}$ both support a perfect stable matching is at least
  \[
    \prod_{k=1}^{U_{1,n}+1} (1-q^k)\prod_{k=1}^{U_{2,n}+1} (1-q^k) \geq \qPoch^2.
  \]
  Thus the conditional probability that $-n-\frac32$ is a cut for both $\sigma_1,\sigma_2$ is at least $(1-q)\qPoch^2$, and the claim follows.

  The final part of the proposition follows by continuity of measure.
\end{proof}

\begin{cor}[Existence of limit matchings]
  \label{prop:limits}
  Let $p\in (0,1]$.
  Let $n \in \Z$, and for each $m\geq0$ let $\sigma_{n,m}$ be the unique stable matching of $K_{(-\infty,m+n],(-\infty,m]}(p)$.
  Then $\sigma_{m,n}$ converges almost surely as $m\to\infty$ to a permutation $\sigma_n$ of $\Z$, and the limit is distributed as the composition of the Mallows permutation of $\Z$ with the shift $\theta_n: i \mapsto i+n$.
\end{cor}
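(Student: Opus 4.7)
The plan is to reduce to the balanced case $n=0$, which \cref{prop:couplingbound} handles directly, and then to treat general $n$ by a relabeling of the bipartite graph.

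For $n=0$, the pairs $\bigl((-\infty,m],(-\infty,m]\bigr)$ and $\bigl((-\infty,m+1],(-\infty,m+1]\bigr)$ are trivially balanced, with $r(A)\wedge r(B)$ equal to $m$ and $m+1$ respectively. Applying \cref{prop:couplingbound} with its integer parameter set to $m-i_0$ yields
\[
  \P\bigl(\sigma_{0,m}(j)=\sigma_{0,m+1}(j)\text{ for all }j\le i_0\bigr)
  \ge 1-\bigl[1-(1-q)\qPoch^2\bigr]^{m-i_0}
\]
for every fixed $i_0\in\Z$ and every $m\ge i_0$. These complementary probabilities are summable in $m$, so Borel--Cantelli implies that $\sigma_{0,m}(j)$ is almost surely eventually constant in $m$ for each $j\in\Z$, producing an almost-sure pointwise limit $\sigma_0\colon\Z\to\Z$. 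The same argument applied to the inverse matchings (which are themselves stable matchings after swapping the roles of males and females) shows that $\sigma_{0,m}^{-1}$ converges almost surely as well, so $\sigma_0$ is a permutation of $\Z$. Since $\sigma_{0,m}\eqd\Mal_q^{(-\infty,m]}$ by \cref{prop:finite}.iv, the finite-dimensional marginals of $\sigma_0$ are limits of those of $\Mal_q^{(-\infty,m]}$; combining \cref{prop:finite} with the Gnedin--Olshanski weak-convergence theorem cited in the introduction identifies these marginals with those of $\Mal_q^\Z$, giving $\sigma_0\eqd\Mal_q^\Z$.

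For general $n$, I would introduce the random bipartite graph $G_n^*$ on $\Z\times\Z$ in which the edge joining $(i,\male)$ and $(j,\female)$ is open precisely when the edge joining $(i,\male)$ and $(j-n,\female)$ is open in $K_{\Z,\Z}(p)$. This is a bijective relabeling of an i.i.d.\ Bernoulli family, so $G_n^*\eqd K_{\Z,\Z}(p)$. Letting $\sigma_{n,m}^*$ denote the unique stable matching of $G_n^*$ restricted to $(-\infty,m+n]\times(-\infty,m+n]$, running the male-proposing algorithm from the proof of \cref{prop:finite} in both graphs simultaneously shows that $\sigma_{n,m}^*(i)=\sigma_{n,m}(i)+n$ for every $i\le m+n$. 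The balanced-case analysis applied to $G_n^*$ in place of $K_{\Z,\Z}(p)$ then yields almost-sure pointwise convergence of $\sigma_{n,m}^*$ to a permutation of $\Z$ distributed as $\Mal_q^\Z$, whence $\sigma_{n,m}=\sigma_{n,m}^*-n$ converges almost surely to a permutation $\sigma_n$. Because $\inv$ is invariant under conjugation by the shift $\theta_n$, so is the law of $\Mal_q^\Z$, and this lets us rewrite the resulting limit law in the form claimed, namely a Mallows permutation of $\Z$ composed with a shift.

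The step I expect to require most care is identifying the balanced limit $\sigma_0$ with $\Mal_q^\Z$: \cref{prop:couplingbound} gives only pointwise almost-sure convergence, and promoting the distributional identity $\sigma_{0,m}\eqd\Mal_q^{(-\infty,m]}$ to $\sigma_0\eqd\Mal_q^\Z$ requires the weak convergence $\Mal_q^{(-\infty,m]}\to\Mal_q^\Z$ as $m\to\infty$, which goes beyond \cref{prop:finite} and must be extracted from the Gnedin--Olshanski machinery (or, alternatively, verified directly from the sampling algorithm used in the proof of \cref{prop:finite}, exploiting that for fixed $j_1,\ldots,j_k$ the marginal law of $\bigl(\sigma_{0,m}(j_i)\bigr)_i$ under $\Mal_q^{(-\infty,m]}$ becomes insensitive to $m$ as soon as $m$ exceeds the largest $j_i$ by a logarithmic amount). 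Once this identification is in place, the shifting construction for general $n$ is essentially bookkeeping.
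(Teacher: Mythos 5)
Your argument is correct and follows essentially the same route as the paper: reduce to $n=0$ by re-indexing, apply \cref{prop:couplingbound} to consecutive values of $m$ to get summable coupling-failure probabilities, invoke Borel--Cantelli for pointwise convergence of $\sigma_{0,m}$ and of its inverse (so the limit is a permutation), and identify the limit law via \cref{prop:finite}. Your extra care about promoting $\sigma_{0,m}\eqd\Mal_q^{(-\infty,m]}$ to $\sigma_0\eqd\Mal_q^\Z$ is warranted and is exactly the point the paper handles (tersely) by appealing to the convergence computation carried out at the end of the proof of \cref{prop:finite}.
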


\begin{proof}
  It suffices to consider the case $n=0$, as the others then follow by re-indexing.
  We write $\sigma_m=\sigma_{0,m}$.
  For each $k\in\Z$, it follows from \cref{prop:couplingbound} that
  \[
    \sum_{m>k} \P\Bigl(\sigma_{m}(k) \neq \sigma_{m+1}(k) \Bigr) < \infty.
  \]
  It follows by Borel-Cantelli that $\sigma_{m}$ almost surely converges pointwise to a function $\sigma : \Z \to \Z$.
  By symmetry, the inverse functions $\sigma^{-1}_m$ also almost surely converge pointwise to a function $\sigma^{-1}:\Z\to\Z$.
  Since  $\sigma^{-1}_m \circ \sigma_m(i) = \sigma_m \circ \sigma^{-1}_m(i) = i$ for every $i \in (-\infty,m]$, it follows that $\sigma^{-1}(\sigma(i))=\sigma(\sigma^{-1}(i))=i$ for every $i\in \Z$, so that $\sigma$ is a permutation almost surely.
  The fact that $\sigma$ is Mallows-distributed with parameter $q$, and in particular that the limit defining this permutation exists, follows from the corresponding statement for $\sigma_m$, proven in \cref{prop:finite}.
\end{proof}

\begin{proof}[Proof of \cref{prop:cutbernoulli}]
  This follows from \cref{prop:cutbernoullieasy} and \cref{prop:limits}, by taking the result through to the limit.
\end{proof}

We call the matching $\sigma_n$ from \cref{prop:limits} the \textbf{Mallows matching} of $K_{\Z,\Z}(p)$ with flow $n$.

\begin{cor}[Mallows matchings are tame]
  \label{cor:Mallowsaretame}
  Let $q \in [0,1)$, and let $\sigma$ be a random permutation of $\m Z$ drawn from the Mallows distribution with parameter $q$. Then almost surely the composition of $\sigma$ with the shift $\theta_n: i \mapsto i+n$ is tame for all $n$.
\end{cor}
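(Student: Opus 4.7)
The plan is to deduce tameness of $\theta_n \circ \sigma$ from the exponential tail on cut gaps provided by \cref{prop:cutbernoulli}. The case $q = 0$ is trivial since $\sigma$ is then almost surely the identity, so I assume $q \in (0,1)$. I will first treat the unshifted case $n = 0$ and then reduce the general case to it via a short comparison using the fact that composing with the shift $\theta_n$ changes edge lengths by at most an additive $|n|$.

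The geometric input is a cut-barrier observation: if $c_- + \tfrac12$ and $c_+ + \tfrac12$ are both cuts of $\sigma$ with $c_- < k < c_+$, then $\sigma$ restricts to a permutation of $(c_-, c_+] \cap \Z$, so any edge of $\sigma$ crossing $k + \tfrac12$ lies entirely in that interval and has length at most $c_+ - c_-$. Writing $c_\pm(k)$ for the nearest cut positions of $\sigma$ surrounding $k$, this yields the pointwise bound $M(\sigma, k + \tfrac12) \leq c_+(k) - c_-(k)$.

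To control $c_+(k) - c_-(k)$, I will invoke \cref{prop:cutbernoulli}: the cut indicator process stochastically dominates an i.i.d.\ Bernoulli process with parameter $\alpha := (1-q)\qPoch > 0$. A standard geometric tail estimate for such a Bernoulli process gives $\P(\text{gap around $k$} > 2s) \leq 2(1-\alpha)^s$, and stochastic domination immediately transfers this bound to the cut gaps of $\sigma$. Taking $s = A \log |k|$ for $A$ sufficiently large, Borel-Cantelli yields $c_+(k) - c_-(k) = O(\log|k|)$ almost surely as $|k| \to \pm\infty$; almost sure finiteness of the gap at each fixed $k$ simultaneously delivers local finiteness, completing the $n = 0$ case.

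For general $n \in \Z$, an edge $(i, \sigma(i) + n)$ of $\theta_n \circ \sigma$ crosses $k + \tfrac12$ only if the underlying $\sigma$-edge $(i, \sigma(i))$ traverses the integer interval spanned by $k$ and $k - n$. Choosing cuts of $\sigma$ with $c_- < \min(k, k-n)$ and $c_+ \geq \max(k, k-n)$, the cut property again confines $i, \sigma(i) \in (c_-, c_+]$, whence the edge length in $\theta_n \circ \sigma$ is at most $(c_+ - c_-) + |n|$. Applying the $n = 0$ tail estimate near both $k$ and $k-n$ shows that for fixed $n$ such enclosing cuts exist with $c_+ - c_- = O(\log|k|)$ almost surely as $|k| \to \pm\infty$, giving tameness of $\theta_n \circ \sigma$; a countable intersection over $n \in \Z$ delivers the ``for all $n$'' conclusion. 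The main bookkeeping hurdle I anticipate is this shift step --- verifying carefully that cuts of the unshifted permutation $\sigma$ remain effective barriers for the shifted matching --- after which everything reduces to a routine Borel-Cantelli application of the exponential cut-gap tail from \cref{prop:cutbernoulli}.
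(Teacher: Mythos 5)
Your proof is correct and follows essentially the same route as the paper's: bound the longest edge crossing $k+\tfrac{1}{2}$ by the gap between the surrounding cuts, apply \cref{prop:cutbernoulli} to get an exponential tail on that gap, and conclude with Borel--Cantelli. The only difference is cosmetic: the paper dispenses with general $n$ by simply observing that tameness is invariant under composition with a shift, which is the fact your final paragraph verifies in detail.
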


\begin{proof}
  Tameness is clearly invariant to composition with a shift, so it suffices to consider the case $n=0$.
  Observe that if $(T_i + \tfrac{1}{2})_{i\in \Z}$ are the sequence of cuts of $\sigma$ in order, then for every $i\in\Z$
  \[
    \max\left\{M(\sigma, j+\tfrac{1}{2}): T_i \leq j \leq T_{i+1}\right\} \leq \left|T_{i+1}-T_i\right|.
  \]
  Thus, if $\sigma$ is the Mallows permutation of $\Z$, then it follows from \cref{prop:cutbernoulli} that
  \[
    \P\left(M(\sigma, i + \tfrac{1}{2}) \geq 2m \right) \leq c_1(q)^m
  \]
  for some $c_1(q)<1$.
  It follows by Borel-Cantelli that $M(\sigma,i+\tfrac{1}{2}) = O(\log |i|)$ almost surely, so that $\sigma$ is tame as claimed.
\end{proof}

We also have the following immediate corollaries of \cref{prop:couplingbound}, showing that every perfect locally finite matching is ``tame towards $-\infty$''.

\begin{cor}
  \label{cor:eventualcoupling2}
  Let $p\in (0,1]$ and consider $K_{\Z,\Z}(p)$. Almost surely, every perfect, locally finite, balanced stable matching $\sigma$ of $K_{\Z,\Z}(p)$ has a cut.
\end{cor}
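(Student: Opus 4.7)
The plan is to leverage the reference matching $\pi := \sigma_{(-\infty,0],(-\infty,0]}$, the unique stable matching of $K_{(-\infty,0],(-\infty,0]}(p)$. By \cref{prop:finite} this is almost surely Mallows-distributed on $(-\infty,0]$, and by \cref{prop:cutbernoullieasy} its cut indicators stochastically dominate a Bernoulli process with positive parameter, so almost surely $\pi$ has cuts at arbitrarily negative positions. The key reduction is that any perfect, locally finite, balanced stable matching $\sigma$ must agree with $\pi$ at all sufficiently negative positions, and therefore inherits any cut of $\pi$ lying in the agreement region.

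First I would observe that low subsets of $\Z$ form a countable family (each low set differs from $(-\infty,0]$ in a finite set), so balanced pairs of low sets contained in $(-\infty,0]$ also form a countable family. For each such balanced pair $(A,B)$, \cref{prop:couplingbound} applied to $((-\infty,0],(-\infty,0])$ and $(A,B)$ gives that $\sigma_{A,B}$ and $\pi$ agree at all sufficiently negative positions, almost surely. Taking a countable intersection produces an almost sure event $\Omega^\star$ on which this coupling holds for every such pair.

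Next, fix any perfect, locally finite, balanced stable matching $\sigma$ and set $A := \sigma((-\infty,0])$. Because $\sigma$ is balanced with flow zero, the edges of $\sigma$ crossing $\tfrac{1}{2}$ in each direction are finite in number and equal, so $A$ is low and $((-\infty,0],A)$ is a balanced low pair. Any blocking pair for the restriction $\sigma|_{(-\infty,0]}$ within $K_{(-\infty,0],A}(p)$ would also be a blocking pair for $\sigma$, so by the uniqueness half of \cref{prop:finite} (after re-indexing) we have $\sigma|_{(-\infty,0]} = \sigma_{(-\infty,0],A}$. On $\Omega^\star$, this restriction agrees with $\pi$ at all sufficiently negative positions, and picking a cut $k+\tfrac{1}{2}$ of $\pi$ lying inside the agreement region yields $\sigma((-\infty,k]) = \pi((-\infty,k]) = (-\infty,k]$, so $k+\tfrac{1}{2}$ is a cut of $\sigma$.

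The main obstacle is that the family of candidate $\sigma$ is a priori uncountable, precluding a naive union bound. This is resolved by the observation that each $\sigma$ is routed into the countable index set via the map $\sigma \mapsto \sigma((-\infty,0])$, so the countable Borel--Cantelli argument defining $\Omega^\star$ accommodates every candidate $\sigma$ simultaneously.
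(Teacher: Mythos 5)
Your argument is correct and follows the route the paper intends: \cref{cor:eventualcoupling2} is stated there as an immediate consequence of \cref{prop:couplingbound}, and your write-up supplies exactly the missing details --- countability of low sets, identification of $\sigma|_{(-\infty,0]}$ with the unique stable matching $\sigma_{(-\infty,0],\sigma((-\infty,0])}$, coupling with the reference Mallows matching, and importing one of its (almost surely infinitely many, by \cref{prop:cutbernoullieasy}) cuts. The observation that the a priori uncountable family of candidate matchings is routed through the countable family of low sets is precisely the point that makes the corollary work, and you have handled it correctly.
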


\begin{cor}
  \label{cor:eventualcoupling}
  Let $p\in (0,1]$ and consider $K_{\Z,\Z}(p)$. Then the following holds almost surely. For every perfect, locally finite stable matching $\sigma$ of $K_{\Z,\Z}(p)$ with flow $n$, there exists $N_\sigma$ such that $\sigma(-i)=\sigma_n(-i)$ for every $i \geq N_\sigma$, where $\sigma_n$ is the Mallows matching of $K_{\Z,\Z}(p)$ with flow $n$.
\end{cor}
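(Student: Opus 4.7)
The plan is to reduce the general flow-$n$ case to the balanced (flow-$0$) case already handled by \cref{cor:eventualcoupling2} and \cref{prop:couplingbound}, via a shift of the female index by $-n$. Concretely, I would define the reindexed bipartite graph $K^{(n)}$ on $\Z\times\{\male,\female\}$ by declaring $\{(i,\male),(j,\female)\}$ to be an edge of $K^{(n)}$ exactly when $\{(i,\male),(j+n,\female)\}$ is an edge of $K_{\Z,\Z}(p)$. Because the edges of $K_{\Z,\Z}(p)$ are i.i.d.\ Bernoulli$(p)$, $K^{(n)}$ has the same distribution as $K_{\Z,\Z}(p)$, so any almost-sure conclusion already proved for $K_{\Z,\Z}(p)$ applies equally well to $K^{(n)}$.

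For any matching $\tau$ of $K_{\Z,\Z}(p)$, set $\tilde\tau(i):=\tau(i)-n$. A routine check shows that $\tilde\tau$ is a matching of $K^{(n)}$ which inherits stability, perfection, and local finiteness from $\tau$, and that the flow of $\tilde\tau$ in $K^{(n)}$ equals the flow of $\tau$ in $K_{\Z,\Z}(p)$ minus $n$. Applied to $\sigma$ and to the Mallows matching $\sigma_n$, this produces two perfect, locally finite, balanced stable matchings $\tilde\sigma$ and $\tilde\sigma_n$ of $K^{(n)}$.

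By \cref{cor:eventualcoupling2} applied to $K^{(n)}$, almost surely both $\tilde\sigma$ and $\tilde\sigma_n$ admit cuts; call them $-m_1+\tfrac{1}{2}$ and $-m_2+\tfrac{1}{2}$ respectively. Below its cut, $\tilde\sigma$ restricts to the unique stable matching of $K^{(n)}_{(-\infty,-m_1],(-\infty,-m_1]}(p)$ and $\tilde\sigma_n$ restricts to the unique stable matching of $K^{(n)}_{(-\infty,-m_2],(-\infty,-m_2]}(p)$, both by \cref{prop:finite}. Since each of the pairs $((-\infty,-m_\ell],(-\infty,-m_\ell])$ is trivially balanced, applying \cref{prop:couplingbound} in $K^{(n)}$ to these two unique stable matchings forces agreement at all sufficiently negative integers. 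Undoing the shift yields $\sigma(-i)=\tilde\sigma(-i)+n=\tilde\sigma_n(-i)+n=\sigma_n(-i)$ for all sufficiently large $i$, which is the claim.

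The only point requiring care is verifying that the shift $\tau\mapsto\tilde\tau$ preserves the relevant combinatorial features (stability, perfection, local finiteness, and the claimed change of flow by $-n$) and that the almost-sure results already established transport cleanly from $K_{\Z,\Z}(p)$ to the distributionally identical $K^{(n)}$; both are straightforward from the definitions together with translation invariance of i.i.d.\ Bernoulli edges. Once these routine checks are in place, no serious obstacle remains, and the corollary is a formal consequence of \cref{cor:eventualcoupling2} and \cref{prop:couplingbound}.
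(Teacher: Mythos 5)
Your argument is correct and is essentially the route the paper intends: \cref{cor:eventualcoupling} is stated there as an immediate consequence of \cref{prop:couplingbound}, obtained exactly as you do by re-indexing the females to reduce flow $n$ to the balanced case and then identifying the matchings below their cuts with the unique stable matchings of intervals via \cref{prop:finite}. The only point worth making explicit is that, since the cut locations $m_1,m_2$ are random, one should first intersect the almost-sure conclusion of \cref{prop:couplingbound} over the countably many deterministic pairs of balanced low sets (as the paper does when introducing $\sigma_{A,B}$), so that it applies simultaneously to whichever pair the matching $\sigma$ happens to produce.
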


Finally, let us note that \cref{thm:tameorwild} is sharp in the sense that, disregarding constants, the definition of tameness cannot be strengthened.

\begin{prop}
  \label{prop:tamesharp}
  Let $p=1-q\in (0,1]$ and let $\sigma$ be the balanced Mallows matching of $K_{\Z,\Z}(p)$. Then
  \[\liminf_{i\to\infty} \frac{M\left(\sigma,i+\tfrac{1}{2}\right)}{\log |i|} \geq \frac{1}{2\log q^{-1}}\]
  almost surely.
\end{prop}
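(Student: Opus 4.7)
The plan is to prove, for each fixed $\epsilon>0$, that the events
\[
A_i := \bigl\{M(\sigma, i+\tfrac{1}{2}) \ge (c-\epsilon)\log i\bigr\},\qquad c := \tfrac{1}{2\log q^{-1}},
\]
occur for infinitely many $i$ almost surely, and then to intersect over a countable sequence $\epsilon\downarrow 0$. The main tool is the second-moment method applied to the count
\[
N_i := \#\bigl\{ j \leq i : \sigma(j) > i,\ \sigma(j)-j \geq m_i \bigr\},\qquad m_i := \lceil (c-\epsilon)\log i\rceil,
\]
so that $A_i\supseteq\{N_i\geq 1\}$. For the first moment, use the shift-invariance of the Mallows measure on $\Z$ (a consequence of $q$-exchangeability, \cite{gnedin2010q}) together with the exponential decay of the one-point marginal $\P(\sigma(j)=k)\asymp q^{|k-j|}$: this gives $\E[N_i]=\sum_{\ell\ge m_i}\ell\, g(\ell) \asymp m_i q^{m_i}\asymp(\log i)\,i^{-(c-\epsilon)\log q^{-1}}$, so $\sum_i \E[N_i]=\infty$ (since $(c-\epsilon)\log q^{-1}<1$).

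For the second moment, the conditional-Mallows property --- conditioning on $\sigma(j_1)=k_1$ leaves the restriction of $\sigma$ to $\Z\setminus\{j_1\}$ a Mallows permutation onto $\Z\setminus\{k_1\}$, up to reindexing --- gives $\P(\sigma(j_1)=k_1,\sigma(j_2)=k_2)\le C\, g(k_1-j_1)g(k_2-j_2)$ uniformly, so $\E[N_i^2]\le C\E[N_i]^2+C\E[N_i]$ and Paley--Zygmund yields $\P(A_i)\gtrsim\E[N_i]$, keeping $\sum_i \P(A_i)=\infty$.

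To lift the pointwise bound to an almost-sure infinitely-often statement we apply Kochen--Stone. The events $A_i, A_{i'}$ are asymptotically independent when $|i-i'|\gg m_i$: by \cref{prop:cutbernoulli}, cuts of $\sigma$ have density $\qPoch>0$, so with probability $1-(1-\qPoch)^{|i-i'|}$ there is a cut of $\sigma$ strictly between $i+\tfrac12$ and $i'+\tfrac12$, and on this event the two events depend on disjoint blocks of $\sigma$ and are independent. Summing, $\sum_{i,i'\le N}\P(A_i\cap A_{i'})\le(1+o(1))\bigl(\sum_{i\le N}\P(A_i)\bigr)^2$, and the Kochen--Stone conclusion $\P(A_i\text{ i.o.})>0$ is upgraded to probability one via the shift-ergodicity of the Mallows measure.

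The two principal obstacles are as follows. First, the uniform two-point bound in the second-moment step is delicate because the Mallows measure on $\Z$ is only a weak limit; the cleanest route is to carry the computation out on finite-interval Mallows measures $\Mal_q^{[-N,N]}$, where the partition function $\prod_{k=1}^{2N+1}(1-q^k)^{-1}$ controls ratios exactly, and pass to the $N\to\infty$ limit using \cref{prop:finite} and \cref{prop:limits}. Second --- and this is the genuinely hard gap between an infinitely-often conclusion (which is what Kochen--Stone delivers) and the stated liminf bound --- one must upgrade ``for infinitely many $i$'' to ``for all sufficiently large $i$'' for the random threshold $(c-\epsilon)\log i$. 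This requires a uniform lower tail on $M(\sigma, i+\tfrac12)$ across large windows rather than at individual sites: for instance, one can partition $[N,2N]$ into blocks of length $K m_N$ (with $K$ large), show using the cut-decoupling that each block independently contains a position at which $A_i$ occurs with probability $1-o(N^{-1})$, and then Borel--Cantelli across the blocks. The quantitative exponent $1/(2\log q^{-1})$ is calibrated precisely so that the single-site first-moment sum diverges with enough room to absorb the polynomial loss from this block-based argument.
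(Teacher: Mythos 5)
The decisive problem is your final ``upgrade'' step, and it is not a gap that can be filled: read literally, the $\liminf$ statement is false, and the very result you invoke for decoupling, \cref{prop:cutbernoulli}, is what refutes it. The cut indicators of $\sigma$ stochastically dominate an i.i.d.\ Bernoulli process of parameter $(1-q)\qPoch>0$, so almost surely there are infinitely many $i>0$ for which $i+\tfrac12$ is a cut of $\sigma$; at a cut no edge crosses $i+\tfrac12$, so $M(\sigma,i+\tfrac12)=0$ (the maximum of the empty set), whence $\liminf_{i\to\infty} M(\sigma,i+\tfrac12)/\log i=0$ almost surely for $q\in(0,1)$. What the paper's own argument actually delivers --- and what sharpness of the tameness definition requires --- is the infinitely-often, i.e.\ $\limsup$, lower bound; the $\liminf$ in the statement must be read that way, and no block/Borel--Cantelli scheme can produce $M(\sigma,i+\tfrac12)\geq(c-\eps)\log i$ for \emph{all} large $i$. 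Your block claim also fails quantitatively on its own terms: $\P(A_i)\lesssim \E[N_i]\asymp m_i q^{m_i}$, which up to logarithmic factors is $i^{-(c-\eps)\log q^{-1}}=i^{-1/2+O(\eps)}$, so a block of polylogarithmic length $Km_N$ inside $[N,2N]$ contains a success site with probability $O(N^{-1/2+O(\eps)}\log^2 N)$ --- tending to zero, not $1-o(N^{-1})$. And even if every such block did contain a success, that would only show that every logarithmic window contains a long crossing edge, which is perfectly compatible with (and in fact coexists with) $\liminf=0$ at the cut sites.

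For the infinitely-often bound that your first two steps target, the machinery is both heavier than necessary and shaky in one place: conditioning a Mallows permutation on $\sigma(j_1)=k_1$ does \emph{not} leave an exact relabelled Mallows law on the complement --- the conditional weight carries the cross term $q^{\#\{j<j_1:\,\sigma(j)>k_1\}+\#\{j>j_1:\,\sigma(j)<k_1\}}$ --- so your uniform two-point bound needs a separate justification. The paper sidesteps all of this by working with the compatibility graph rather than the permutation: set $X_i=\min\{|i-j|:(j,\female)\text{ is compatible with }(i,\male)\}$. These are genuinely i.i.d.\ (they are functions of disjoint sets of potential edges of $K_{\Z,\Z}(p)$), with the exact tail $\P(X_i\geq n)=q^{2n-1}$; the matched edge at $(i,\male)$ has length at least $X_i$ and crosses a half-integer adjacent to $i$; and with $a^{-1}=2\log q^{-1}$ the sum $\sum_i q^{2a\log i}\asymp\sum_i i^{-1}$ diverges, so the second Borel--Cantelli lemma for honestly independent events gives $X_i\geq a\log i$ infinitely often --- no second moments, Kochen--Stone, or ergodicity. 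The factor $2$ in the constant reflects the two-sided window of $2n-1$ females in the tail of $X_i$; your crossing-edge count, were its two-point input repaired, would plausibly yield the i.o.\ bound with the larger constant $1/\log q^{-1}$, but the proposition claims only a one-sided bound, so this costs nothing.
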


\begin{proof}
  For each $i\in \Z$, let $X_i = \min\{ |i-j| : (j,\female) \text{ is compatible with } (i,\male) \}$.
  Then the random variables $X_i$ are i.i.d.\ with $\P(X_i \geq n) = q^{2n-1}$ for all $n \geq 1$.
  Thus, for all $a>0$ and for all $i\geq e^{1/a}$ we have that
  \[
    \P(X_i \geq a \log i) = q^{2\lfloor a\log i\rfloor-1}.
  \]
  When $a^{-1}=2\log q^{-1}$, the latter expression has infinite sum (over $i\geq e^{1/a}$) and thus by the Borel-Cantelli Lemma $X_i \geq a\log |i|$ infinitely often, almost surely.
  The claim follows since $M(\sigma,i+\frac12)\wedge M(\sigma,i-\frac12) \geq X_i$.
\end{proof}

\section{The dichotomy between tame and wild}

In this section we complete the proof of \cref{thm:tameorwild,thm:tamechar}. The central additional ingredients required are the following two lemmas concerned with perfect, locally finite stable matchings.

\begin{lemma}
  \label{lem:tamesufficient}
  Let $p\in (0,1]$, let $q=1-p$, and consider the random bipartite graph $K_{\Z,\Z}(p)$. There exists a positive constant $c_{2}(q)$ such that the following holds almost surely: Every perfect, locally finite stable matching $\sigma$ of $K_{\Z,\Z}(p)$ satisfying
  \begin{equation}
    \liminf_{i\to+\infty}\frac{1}{i} M\left(\sigma,i+\tfrac{1}{2}\right) \leq c_2(q)
    \label{eq:smalllinear}
  \end{equation}
  is equal to the Mallows stable matching of $K_{\Z,\Z}(p)$ with the same flow as $\sigma$. In particular, $\sigma$ is tame if and only if \eqref{eq:smalllinear} holds.
\end{lemma}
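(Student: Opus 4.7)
The plan is to combine the eventual-coincidence result \cref{cor:eventualcoupling} with the $\liminf$ hypothesis and the coupling bound \cref{prop:couplingbound}. Let $n=\flow(\sigma)$ and write $\tau = \sigma_n$ for the Mallows matching of $K_{\Z,\Z}(p)$ with flow $n$; by \cref{cor:Mallowsaretame}, $\tau$ is tame. I will show $\sigma=\tau$ almost surely by proving that the disagreement set $D=\{j\in\Z : \sigma(j)\neq\tau(j)\}$ is empty.

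First I would rule out bounded disagreement. By \cref{cor:eventualcoupling}, $D$ is bounded below almost surely. If it were also bounded above, say $D \subseteq (-\infty, K]$, then $\sigma$ and $\tau$ would agree on $(K,\infty)$ and hence map $(-\infty, K]$ bijectively onto a common set of females $F$; both restrictions are stable matchings of $K_{(-\infty,K], F}(p)$, which by the uniqueness clause of \cref{prop:finite} forces $\sigma = \tau$ on $(-\infty, K]$, contradicting $D\ne\emptyset$. Hence, if $\sigma\ne\tau$, then $D$ is unbounded above.

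Next I would exploit the $\liminf$ condition at good positions. Pick $i$ with $M(\sigma, i+\tfrac12)\le L := \lceil c_2 i\rceil$; tameness of $\tau$ ensures $M(\tau, i+\tfrac12)\le L$ for all large enough $i$. A direct analysis of the allowed crossings shows that $A_\sigma := \sigma((-\infty, i])$ and $A_\tau := \tau((-\infty, i])$ are low sets with $(-\infty, i-L]\subseteq A_\sigma, A_\tau\subseteq (-\infty, i+L]$, so both have $r\ge i-L$. The restrictions $\sigma|_{(-\infty,i]}$ and $\tau|_{(-\infty,i]}$ are stable matchings of $K_{(-\infty,i], A_\sigma}(p)$ and $K_{(-\infty,i], A_\tau}(p)$, hence equal the unique matchings $\sigma_{(-\infty,i], A_\sigma}$ and $\sigma_{(-\infty,i], A_\tau}$. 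Applying \cref{prop:couplingbound} to the two balanced low pairs $((-\infty,i], A_\sigma)$ and $((-\infty,i], A_\tau)$ gives, for each $m\ge 0$,
\[ \P\bigl(\sigma(j)=\tau(j) \text{ for all } j\le i-L-m\bigr) \ge 1 - [1-(1-q)\qPoch^2]^m. \]

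Finally I would close the argument by a union bound over admissible configurations followed by Borel-Cantelli. Since $A_\sigma$ and $A_\tau$ are random and unknown in advance, \cref{prop:couplingbound} must be applied to every candidate pair; each of $A_\sigma, A_\tau$ may be any subset of $(i-L,i+L]$ merged with $(-\infty, i-L]$, yielding at most $4^{2L}$ candidates. Fix $k\in\Z$ and take $m=\lfloor(1-c_2)i-k\rfloor$; the union bound gives a failure probability at scale $i$ of at most $4^{2c_2 i}\cdot [1-(1-q)\qPoch^2]^{(1-c_2)i-k}$. Choosing $c_2(q)>0$ sufficiently small that $2c_2 \log 4 < (1-c_2)\log\!\bigl[1/(1-(1-q)\qPoch^2)\bigr]$, this decays exponentially in $i$. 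Summing along a sparse subsequence $i_s\to\infty$ of good positions produces a convergent series, so by Borel-Cantelli $\sigma(k)=\tau(k)$ almost surely; a countable union over $k\in\Z$ then yields $\sigma = \tau$ a.s. The main obstacle is the dependence of $A_\sigma$ and $A_\tau$ on the potentially uncountably many matchings $\sigma$, which forces the combinatorial union bound over the $4^{2L}$ candidate pairs and determines how small $c_2(q)$ must be chosen.
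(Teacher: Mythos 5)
Your proposal is correct and follows essentially the same route as the paper: at positions $i$ where $M(\sigma,i+\tfrac12)\le c_2 i$, identify $\sigma|_{(-\infty,i]}$ with the unique stable matching $\sigma_{(-\infty,i],A}$ for a low set $A$ whose symmetric difference with $(-\infty,i]$ lies in a window of width $O(c_2 i)$, apply \cref{prop:couplingbound} together with a union bound over the exponentially many candidate sets, choose $c_2(q)$ small enough that the coupling decay beats the entropy, and conclude by Borel--Cantelli over a deterministic almost-sure event (your flagged concern about the dependence of $A_\sigma$ on $\sigma$ is exactly what this quantification over all candidates resolves, just as in the paper). The only slips are cosmetic: for flow $n\neq 0$ the pair $((-\infty,i],\sigma((-\infty,i]))$ is not balanced as required by \cref{prop:couplingbound}, which is fixed by re-indexing to the balanced case at the outset (as the paper does), and your preliminary step ruling out bounded disagreement is never actually used.
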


\begin{lemma}
  \label{lem:wildsufficient}
  Let $p\in (0,1]$, let $q=1-p$, and consider the random bipartite graph $K_{\Z,\Z}(p)$.
  Almost surely, every perfect, locally finite stable matching $\sigma$ of $K_{\Z,\Z}(p)$ satisfying
  \begin{equation}
  \label{eq:wildsufficientlinear}
    \liminf_{i\to+\infty}\frac{1}{i}M(\sigma,i+\tfrac{1}{2}) > 0
  \end{equation}
  is wild. In particular, $\sigma$ is wild if and only if \eqref{eq:wildsufficientlinear} holds.
\end{lemma}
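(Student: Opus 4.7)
The plan is to prove this by combining a stability constraint with an almost-sure concentration estimate for the random graph $K_{\Z,\Z}(p)$, leading to a cascade that upgrades the linear lower bound $M(\sigma,i+\tfrac12)\geq ci$ into an exponential one. First I would establish via Chernoff and Borel--Cantelli the following: there are a constant $K=K(p)$ and a (random) $F_0$ such that, almost surely, for every $f$ with $|f|\geq F_0$ and every interval $[a,b]\subseteq[-|f|^2,|f|^2]$ with $b-a\geq K\log|f|$, the number of males in $[a,b]$ compatible with $(f,\female)$ is at least $\tfrac{p}{2}(b-a)$. This is a statement purely about the graph and holds uniformly over all stable matchings simultaneously.

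Next I would derive the key stability consequence for any perfect, locally finite stable matching $\sigma$: for an edge $(m,f)$ of $\sigma$ with $m<f$ and $f-m\geq K\log|f|$, every male $m'\in(m,f]$ compatible with $(f,\female)$ must satisfy $\sigma(m')>f$. Combined with the concentration estimate, this yields at least $\tfrac{p}{2}(f-m)$ distinct females $\sigma(m')>f$ matched to males in $(m,f]$; in particular the set $\{\sigma(m'):m'\in(m,f],\ \sigma(m')>f\}$ contains a value at least $f+\tfrac{p}{2}(f-m)$, producing a new edge $(m^\sharp,f^\sharp)$ with $m^\sharp\leq f$ and $f^\sharp\geq f+\tfrac{p}{2}(f-m)$ crossing location $f+\tfrac12$.

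The core of the argument is a bootstrap in which \eqref{eq:wildsufficientlinear} is re-injected at every cascade checkpoint. Starting from $(m_0,f_0)$ with $\ell_0=f_0-m_0\geq ci_0$, I alternate two moves at each step $k$: (i) apply the stability amplification above to the current edge to produce a daughter edge extending beyond $f_k$; (ii) replace the daughter edge, whenever it is shorter, by a fresh edge at the new position, which is guaranteed by \eqref{eq:wildsufficientlinear} to have length at least $cf_{k+1}$. Tracking both $f_k$ and $\ell_k$ carefully, I aim to show that this hybrid iteration produces $\ell_k$ multiplied by a factor strictly greater than one per round while $f_k$ advances by no more than $O(\ell_k)$. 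Taking logarithms yields $\log M(\sigma,f_k+\tfrac12)$ growing linearly in $f_k$ along the checkpoints, and since each cascade edge witnesses $M(\sigma,j+\tfrac12)\geq\ell_k$ for every $j$ in the whole interval $[m_k,f_k-1]$ of positions it crosses, the exponential lower bound transfers to all intermediate $i$, giving $\liminf_{i\to+\infty}\log M(\sigma,i+\tfrac12)/i>0$ as required.

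The principal obstacle is exactly the bootstrap step: a naive single-edge cascade decays geometrically by a factor of $p/2<1$ per iteration and by itself cannot produce any growth of $\ell_k$ at all. It is essential to use the hypothesis \eqref{eq:wildsufficientlinear} at every checkpoint to prevent collapse, and to show that the alternating-move strategy produces a genuinely exponential envelope rather than merely re-proving the linear hypothesis. Verifying that the intermediate positions between checkpoints $f_k$ and $f_{k+1}$ are not left uncovered (so that wildness holds at every $i$) is the last subtlety, and is handled by the observation that any single edge of length $\ell_k$ witnesses $M(\sigma,j+\tfrac12)\geq\ell_k$ across the entire range of positions it crosses.
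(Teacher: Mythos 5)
There is a genuine quantitative gap at the heart of your bootstrap, and it is exactly the one you flag as the ``principal obstacle'': the hybrid iteration you describe cannot produce exponential growth. Follow your own bookkeeping. From an edge of length $\ell_k$ ending at $f_k$, the amplification step advances the checkpoint to $f_{k+1}\geq f_k+\tfrac{p}{2}\ell_k$, and re-injecting \eqref{eq:wildsufficientlinear} gives a fresh edge of length $\ell_{k+1}\geq c\,f_{k+1}$. This does yield $\ell_{k+1}\geq(1+\tfrac{cp}{2})\ell_k$ together with $f_{k+1}-f_k=O(\ell_k)$, but these two facts lock $f_k$ and $\ell_k$ into the \emph{same} geometric rate: $f_k=f_0+O\bigl(\sum_{j<k}\ell_j\bigr)=O(\ell_k)$ since the $\ell_j$ grow geometrically, so $\ell_k$ remains comparable to $f_k$ and you have merely re-derived the linear hypothesis. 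Your final inference is therefore false: along your checkpoints $\log M(\sigma,f_k+\tfrac12)$ grows like $\log f_k$, not like $f_k$. To get wildness you need the length to jump from linear to exponential in essentially one step; no iteration of a ``multiply by a constant, advance by $O(\ell)$'' move can achieve this, with or without re-injection of the hypothesis.

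The paper supplies the missing single-step jump with an idea absent from your proposal. Besides the set $V$ of $\gtrsim \eps i$ males in $(i,i+\eps i]$ forced by stability to be matched above the long edge's female endpoint (your males with $\sigma(m')>f$), it uses the flow/cut structure of $\sigma$ to produce a second set $W'$ of $\gtrsim \eps i$ females below $i+\eps i$ who are matched to males above $i+\eps i$ and who, again by stability, cannot be matched to anyone in $V$. If no edge incident to $V\cup W'$ escapes $[0,e^{\delta i}]$, then $A=\sigma^{-1}(W')$ and $B=\sigma(V)$ are two sets of size $\gtrsim \eps i$ inside $(i+\eps i,e^{\delta i}]$ between which stability forbids any compatible pair; a first-moment bound shows that almost surely no such empty bipartite rectangle of side $\gtrsim \eps i$ exists inside $[0,e^{\delta i}]$ once $\delta$ is small enough. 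Hence some edge must already reach beyond $e^{\delta i}$, and wildness follows with no cascade at all. Your one-sided stability consequence and your Chernoff estimate are correct ingredients, but without the second family $W'$ (which requires the balanced/cut reduction you never make) and the empty-rectangle bound, there is no mechanism for the exponential blow-up.
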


Before proving these lemmas, let us use them to prove \cref{thm:tameorwild} and \cref{thm:tamechar}.

\begin{proof}[Proof of \cref{thm:tameorwild}]
Let $\Omega$ be the almost sure event that for every $i\in \Z$, the sets $\{j : (j,\female)$ compatible with $(i,\male)\}$ and $\{j : (j,\male)$ compatible with $(i,\female)\}$ are both unbounded below. We claim that every locally finite stable matching of $K_{\Z,\Z}(p)$ is perfect on the event $\Omega$. Indeed, suppose without loss of generality that $(i,\male)$ is unmatched in some stable matching $\sigma$ of $K_{\Z,\Z}(p)$. Then each element of the set $\{j : (j,\female)$ compatible with $(i,\male)\}$ must be matched to a male more attractive than $(i,\male)$, and thus $\sigma$ is not locally finite. The remaining claims of the theorem follow from \cref{lem:tamesufficient,lem:wildsufficient}, since at least one of \eqref{eq:smalllinear} or \eqref{eq:wildsufficientlinear} must hold for every $\sigma$.
\end{proof}

\begin{proof}[Proof of \cref{thm:tamechar}]
The fact that a tame stable matching of flow $n$ exists almost surely for each $n$ follows from follows from \cref{prop:limits,cor:Mallowsaretame}, while the fact that the matching is unique and can be described as the limit of the unique stable matchings $\sigma_{n,m}$ of $K_{(-\infty,m],(-\infty,m+n]}(p)$ as $m\to\infty$ follows from \cref{lem:tamesufficient}. Given this limiting construction, \cref{thm:tamechar}.iii follows from \cref{prop:finite}.
  For \cref{thm:tamechar}.iv, fix $m \geq 0$ and $n \in \Z$. Let $(i_k)_{k\geq0}$ and $(j_k)_{k\geq1}$ be decreasing sequences defined recursively by $i_0= m+n+1$, $j_k=\sigma_{m,n+1}^{-1}(i_{k-1})$ for every $k\geq 0$ and $i_k = \sigma_{m,n}(j_k)$ for every $k\geq 1$.
  Considering the gender-reversal of the recursive procedure for computing $\sigma_{m,n}$ and $\sigma_{m,n+1}$ as in the proof of \cref{prop:finite}, beginning by assigning matches to the most attractive females in $(-\infty,m+n]$ and $(-\infty,m+n+1]$ respectively, it follows by a straightforward induction argument that
  \begin{align*}\sigma_{m,n+1}^{-1}(i) &= \begin{cases} j_{k+1} & \text{ if $i=i_k$ for some }k \geq 0,\, \\
      \sigma_{m,n}^{-1}(i) &  \text{otherwise} \end{cases}
    \intertext{and similarly that}
    \sigma_{m,n}(j) &= \begin{cases} i_k & \text{ if $j=j_k$ for some $k\geq 1$}\\
      \sigma_{m,n+1}(i) & \text{otherwise.} \end{cases}
  \end{align*}
  The second part of \cref{thm:tamechar}.iv follows by taking the limit as $m\to\infty$.
  The first part follows from the second, since if for each $n$ males do no worse in $\sigma_{n+1}$ than in $\sigma_n$, then the same comparison holds for any $\sigma_n,\sigma_{n'}$.
\end{proof}

\begin{proof}[Proof of \cref{lem:tamesufficient}]
  By re-indexing, it suffices to consider the balanced case of flow $0$.
  Let $0<a<1$, and let $\sA_{i,a}$ be the set of low subsets $A$ of $\Z$ such that the pair of low sets $((-\infty,i],A)$ is balanced, and the symmetric difference of $A$ with $(-\infty,i]$ is contained in $[(1-a)i,(1+a)i]$.
  Clearly $|\sA_{i,a}|\leq 2^{2ai+1}$. Let $k\geq 0$. By Proposition~\ref{prop:couplingbound}, for every $A\in \sA_{i,a}$ we have that
  \[
    \P\Bigl(\sigma_{(-\infty,i],A}(j)=\sigma_{(-\infty,i],(-\infty,i]}(j) \text{ for every } j \leq k\Bigr) \geq 1-c_1(q)^{(1-a)i-k},
  \]
  where $c_1(q)=1-(1-q)(q)_\infty^2$. Thus, it follows by the union bound that
  \[
    \P\left(\sigma_{(-\infty,i],A}(j)=\sigma_{(-\infty,i],(-\infty,i]}(j)  \text{ for every } j \leq k \text{ and $A\in \sA_{i,a}$}\right) \geq 1- 2^{2ai+1}c_1(q)^{(1-a)i-k}.
  \]

  Fix $a>0$ such that $2^{2a}c_1(q)^{(1-a)}<1$ (such an $a$ exists since $c_1(q)<1$). By the Borel-Cantelli lemma and the previous lower bound, the event
  \[
    \Omega_a:= \forall k \ \exists \ell \ \forall i\geq \ell \
    \left\{ \sigma_{(-\infty,i],A}(j)=\sigma_{(-\infty,i],(-\infty,i]}(j)  \text{ for every } j \leq k \text{ and $A\in \sA_{i,a}$}\right\}
  \]
  occurs almost surely. Consider the set of balanced, perfect, locally finite stable matchings $\sigma$ of $K_{\Z,\Z}(p)$ that satisfy
  \[
    \liminf_{i\to+\infty}\frac{M\left(\sigma,i+\tfrac{1}{2}\right)}{i} < a.
  \]
  For any such matching, $\sigma((-\infty,i])$ belongs to $\sA_{i,a}$ for infinitely many $i$.
  However, on the event $\Omega_a$, the balanced Mallows stable matching of $K_{\Z,\Z}(p)$ is the only stable matching of $K_{\Z,\Z}(p)$ with this property, and so $\sigma$ must be equal to this matching.
\end{proof}

\begin{proof}[Proof of \cref{lem:wildsufficient}]
  By re-indexing it suffices to consider balanced stable matchings. Moreover, since by \cref{cor:eventualcoupling2} there is a probability one event on which every perfect, locally finite, balanced stable matching of $K_{\Z,\Z}(p)$ has a cut, it suffices by re-indexing to consider matchings that have a cut at $\tfrac{1}{2}$. That is, it suffices to prove that there is a probability one event on which
  every perfect, locally finite, balanced stable matching $\sigma$ of $K_{\Z,\Z}(p)$ that has a cut at $1/2$ and satisfies
  \begin{equation}
    \liminf_{i\to+\infty}\frac{1}{i}M(\sigma,i+\tfrac{1}{2}) > 0
  \end{equation}
  is wild.

  For $\eps,\delta>0$, define $\Omega_{\eps,\delta}$ to be the event on which there exists $N<\infty$ such that the following conditions hold for all $i\geq N$.

  \begin{samepage}
  \begin{enumerate}
  \item[(i)]
    The inequalities \[\#\bigl\{i \leq j \leq  i + \eps i : (j,\male) \text{ is compatible with } (i+k,\female) \bigr\} \geq \tfrac{1-q}{2}\eps i\]
and
\[\#\bigl\{i \leq j \leq  i + \eps i : (j,\female) \text{ is compatible with } (i+k,\male) \bigr\} \geq \tfrac{1-q}{2}\eps i \] both
     hold for every $ 1\leq k \leq e^{\delta i}$.
  \item[(ii)]
    There do not exist sets $A,B \subseteq [0,e^{\delta i}]$ with $|A|,|B| \geq \frac{1-q}{2} \eps i$ such that no male in $A \times \{\male\}$ is compatible with any female in $B \times \{\female\}$.
  \end{enumerate}
\end{samepage}

  We claim that for each $\eps>0$, there exists $\delta(\eps,q)>0$ such that $\Omega_{\eps,\delta}$ occurs almost surely.
Indeed, for condition (i), the Chernoff bound implies that there exists a constant $c_{3}(\eps,q)<1$ such that for each $i,k\geq 0$, the probability that either of the sets in question is smaller than $(1-q)\eps i/2$ is at most $c_3(\eps,q)^i$. Thus, summing over the possible choices of $k$ and $i$ and applying Borel-Cantelli shows that if $\delta$ is sufficiently small then the required inequalities hold for all $i$ sufficiently large almost surely.

  \begin{figure}
    \includegraphics[height=.4\textheight]{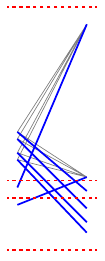}
    \setlength{\unitlength}{.4\textheight}
    \begin{picture}(0,1)
      \put(0,.03){$0$}
      \put(0,.225){$i$}
      \put(0,.295){$(1+\epsilon)i$}
      \put(0,.96){$e^{\delta i}$}
    \end{picture}
    \caption{Exponential blow-up in wild matchings.  Edges whose length is linear in their height necessitate edges whose length is exponential in their height (with high probability as the height goes to infinity).}
  \end{figure}

  For condition (ii), it suffices to consider $A,B$ of the minimal possible size $s = \frac{1-q}{2} \eps i$.
  Counting the choices of $A$ and $B$ and using the the union bound gives the following upper bound on the probability that there exist sets $A,B$ violating (ii):
  \[
    \P\bigl(\text{there exist such }A,B \subseteq [0,e^{\delta i}]\bigr)
    \leq \binom{e^{\delta i}}{s}^2 q^{s^2}
    \leq e^{2\delta i s - s^2 |\log q|} = e^{C(\eps,\delta)i^2},
  \]
  where we have used the elementary inequality $\binom{m}{n}\leq m^n$.
  If $\delta$ is sufficiently small then $C(\eps,\delta)<0$, and the result follows by Borel-Cantelli.
  This completes the proof of the claim.


Now suppose that the almost sure event $\Omega = \bigcap_{n\geq 1} \Omega_{1/n,\delta(1/n)}$ occurs.
 Let $\sigma$ be a stable perfect matching of $K_{\Z,\Z}(p)$ that is balanced and has a cut at $\tfrac{1}{2}$, let $n\geq 2$ be such that
   $\eps =1/n < \frac12 \liminf M(\sigma,i+1/2)/i$, and let $\delta=\delta(\eps,q)$.
  For each $a,b\geq 0$, we define the indicator functions
  \[
    F(a,b) = \mathbbm{1}\!\left(\exists i \leq a \text{ such that $\sigma(i)>b$ or $\sigma^{-1}(i)>b$}\right).
  \]
  We claim that if $i\geq N$ and $F(i,i+\eps i)=1$, then we must have that $F(i+\eps i,e^{\delta i})=1$ also.
  Given this, it is easily seen that $\sigma$ is wild: Indeed, our choice of $\eps$ guarantees that for every sufficiently large $i$ there is an edge of length at least $2\eps i$ spanning $i$, so that at least one of $F(i,i+\eps i)$ or $F(i-\eps i,i)$ is equal to $1$ for every sufficiently large $i$.
  In the first case we have that $F(i+\eps i, e^{\delta i})=1$ and in the second we have that $F(i,e^{\delta(i-\eps i)})=1$. In either case we deduce that $M(\sigma,2i+1/2) \geq e^{\delta i/2}$,  concluding the proof.

  To prove the claim above, let $k=\lceil \eps i \rceil$, and suppose that $F(i,i+k)=1$.
  By symmetry between males and females, let us suppose without loss of generality that some $j\leq i$ has $\sigma(j) > i+k$.
  If $\sigma(j)>e^{\delta i}$, we are done.
  Otherwise, let $V$ be the set of males in $(i,i+k]\times \{\male\}$ that are compatible with $\sigma(j)$.
  By (i) we have that $|V| \geq \frac{1-q}{2} k$.
  Note that each male $(a,\male)\in V$ must have $\sigma(a) > \sigma(j)$, since otherwise $\sigma$ would be unstable.
  Moreover, if any $a\in V$ has $\sigma(a)>e^{\delta i}$ we are done, so we may suppose not.

  Since $\sigma$ is balanced and has a cut at $1/2$, there is a set $W\subseteq [1,i+k]$ of size at least $k$ such that $\sigma^{-1}(w) > i$ for every $w\in W$.
  If any $w\in W$ has $\sigma^{-1}(w) > e^{\delta i}$, we are done, so suppose not.
  Since each female in $W\times\{\female\}$ is less attractive than $\sigma(j)$ and is matched to a male more attractive than $(j,\male)$, stability implies that they cannot be matched to any male in $V\times\{\male\}$.
  It follows that there is a set $W'\subset W$ of females of size at least $|V|$ that are matched to males in $(i+k,e^{\delta i}]\times\{\male\}$.

  Let $B=\sigma(V)$ and $A=\sigma^{-1}(W')$.
  Summarizing our conclusions, $V,W'$ are sets of size at least $\frac{1-q}{2} k$ in $[0,i+k]$, and $A,B$ are sets of size at least $\frac{1-q}{2} k$ in $(i+k,e^{\delta i}]$.
  Stability implies that no male in $A\times\{\male\}$ is compatible with any female in $B\times\{\female\}$, which contradicts (ii).
\end{proof}

\begin{remark}
  It is possible to show using the above proof that for every $p=1-q \in (0,1)$, there exists a positive constant $c_3(q)$ such that
  \[
    \liminf_{i\to+\infty}\frac{\log M\bigl(\sigma,i+\tfrac{1}{2}\bigr)}{i} \geq c_3(q)
  \]
  for every wild stable matching of $K_{\Z,\Z}(p)$ almost surely.
\end{remark}

\section{Existence of wild stable matchings}

\begin{proof}[Proof of \cref{thm:wildexistence}]
  For a finite set $A\subset\Z$ and for an integer $i\in\Z\setminus A$, let $\Omega_{A,i}$ be the event that both of the sets
  \[
    \bigl\{j\in\m Z\colon (j,\female)\text{ is compatible with $(i,\male)$ but not $(k,\male)$ for any $k\in A$}\bigr\}
  \]
  and
  \[
    \bigl\{j\in\m Z\colon (j,\male)\text{ is compatible with $(i,\female)$ but not $(k,\female)$ for any $k\in A$}\bigr\}
  \]
  are unbounded from above.
  Clearly $\P(\Omega_{A,i})=1$ for all $A$ and $i$.
  Since there are only a countable number of finite subsets of $\Z$, the event
  $$
  \Omega=\bigcap \{\Omega_{A,i}\colon A\text{ finite subset of }\m Z,\ i\in \mathbb Z\setminus A\}
  $$
  also has probability $1$.

  We will show that the conclusions of the theorem hold on $\Omega$.
  All three claims will follow from variants of the same construction.
  Let $(x_n)_{n\geq 0}$ be an enumeration of a subset of $\Z \times \{\male,\female\}$ (representing individuals to be matched in a given order), and let $(a_j)_{j\geq 1}$ be a sequence of positive integers (which will encode uncountably many options).
  We take the enumerated subset to be one of the following:
  \begin{enumerate}
  \item $\big(\Z \times \{\male\}\big) \cup \big([0,\infty)\times \{\female\}\big)$
  \item $\Z\times \{\male,\female\}$.
  \item $[0,\infty) \times \{\male,\female\}$.
  \end{enumerate}
  These three choices are used to construct matchings that are respectively: not perfect; perfect but not locally finite; and perfect, locally finite, and wild.
  For each fixed sequence $(x_n)_{n\geq0}$, every choice of the sequence $(a_j)_{j\geq 1}$ will yield a different stable matching with this desired properties.

  At each step of the construction, we choose the next individual in the sequence, and if they are not already matched, we find a compatible partner for them who is incompatible with everyone previously matched.
  At the end of step $n\geq 0$ of the construction, we have a partial matching $\sigma_n$ with the property that the vertices $x_1,\dots,x_n$ are all matched, and that $\sigma_n$ is a stable perfect matching of the subgraph induced by the set of all matched vertices.

  Initially we take $\sigma_0$ to be the empty matching.
  Given $\sigma_{n}$ for some $n\geq 0$, we define $\sigma_{n+1}$  as follows.
  \begin{itemize}
  \item If $x_{n+1}$ is already matched in $\sigma_n$ then we set $\sigma_{n+1}=\sigma_{n}$. (That is, we do nothing.)
  \item If $x_{n+1}$ is not matched in $\sigma_n$, then we choose a partner for it as follows.
    Suppose $x_{n+1}$ is a female (the case of a male $x_{n+1}$ is similar).
    We will match $(x_{n+1},\female)$ to a male in the set $S \times \{\male\}$, where
    \[
      S = \left\{k \geq 0 \colon \begin{array}{l} (k,\male)\text{ is more attractive than any male that has already been}\\ \text{matched, is compatible with $x_{n+1}$, and is not compatible with any}\\ \text{female that is matched in $\sigma_n$}\end{array}\right\}.
    \]
    (We take $k\geq 0$ so that $S$ is bounded from below.) On $\Omega$, the set $S$ is unbounded from above.
    If $\sigma_n$ has $j$ edges, we match $(x_{n+1},\female)$ to the $a_{j+1}$-th least attractive male in $S\times\{\male\}$.
  \end{itemize}
\noindent
  Let $\sigma = \lim \sigma_n$. As a result, the following hold.
  \begin{itemize}
  \item If $(x_i)_{i\geq 0}$ is an enumeration of $A = (\Z \times \{\male\}) \cup ([0,\infty)\times\{\female\})$, then $\sigma$ is a perfect matching of $A$, with $(-\infty,-1]\times\{\female\}$ left unmatched.
    Such a matching must also be a stable matching of $K_{\Z,\Z}(p)$, since every male in the matching prefers their partner to every female in $(-\infty,-1]$.
  \item If $(x_i)_{i\geq 0}$ is an enumeration of $\Z\times\{\male,\female\}$, taking the limit as $n \to\infty$ we obtain a perfect stable matching of $K_{\Z,\Z}(p)$. The construction ensures that for every $k>0$, the partner of $(-k,\female)$ belongs to $[0,\infty)\times \{\male\}$, so that $\sigma$ is not locally finite.
  \item If $(x_i)_{i\geq 0}$ is an enumeration of $[0,\infty)\times \{\male,\female\}$, then $\sigma$ is a stable perfect matching of $K_{[0,\infty),[0,\infty)}(p)$.
    Combining this with the unique stable matching of $K_{(-\infty,-1],(-\infty,-1]}(p)$, we obtain a stable perfect matching of $K_{\Z,\Z}(p)$ with a cut at $-\tfrac{1}{2}$, which must be balanced.
  \end{itemize}

  It is clear that, in each case, different sequences $(a_n)$ yield different matchings $\sigma$.
  Indeed, at the first place two sequences differ, the edge added to the matchings will also differ.
  In the last case (locally finite matchings) we have not ruled out that $\sigma$ is the unique  balanced, tame stable matching. However, excluding this matching still leaves uncountably many locally finite, balanced, wild stable matchings.
  Similar statements for other values of the flow follow by re-indexing.

Finally, note that in the third case, the restrictions to $K_{[0,\infty),[0,\infty)}(p)$ of the matchings that we obtain  are stable and perfect, and by re-indexing we obtain that $K_{I,I}(p)$ has uncountably many wild stable matchings almost surely for every infinite interval $I$ that is bounded from below.
\end{proof}

Finally, we show that \cref{thm:tameorwild} is sharp in the sense that, disregarding constants, the definition of wildness cannot be strengthened.

\begin{prop}
  \label{prop:wildsharp}
  Let $p=1-q\in (0,1)$ and consider the random bipartite graph $K_{\Z,\Z}(p)$.
  Then there almost surely exists a perfect, locally finite, wild stable matching of $K_{\Z,\Z}(p)$ such that
  \begin{equation}
    \label{eq:wildsharpdesired}
    \limsup_{i\to+\infty} \frac{\log M\bigl(\sigma,i+\tfrac{1}{2}\bigr)}{i} \leq 2 \log q^{-1}.
  \end{equation}
\end{prop}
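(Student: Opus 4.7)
The plan is to revisit the construction from the proof of \cref{thm:wildexistence} and make it fully explicit.  Enumerate $[0,\infty)\times\{\male,\female\}$ in the natural order $(0,\male),(0,\female),(1,\male),(1,\female),\ldots$, take $a_j=1$ for every $j$ (so that each processed individual is matched to the least attractive eligible partner), and combine the resulting matching on $K_{[0,\infty),[0,\infty)}(p)$ with the unique stable matching on $K_{(-\infty,-1],(-\infty,-1]}(p)$ to obtain a balanced, perfect, locally finite stable matching $\sigma$ of $K_{\Z,\Z}(p)$ with a cut at $-\tfrac12$.  Let $H_i$ denote the maximum level of any matched individual after processing stage $i$.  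After stage $i$, every individual of level $\leq i$ has been matched, and the individual of level $H_i$ is matched to a partner that was processed at some stage $\leq i$, hence of level $\leq i$; consequently $H_i - i \leq M(\sigma,i+\tfrac12) \leq H_i$ as soon as $H_i > i$.  The task then reduces to establishing $\log H_i/i \to 2\log q^{-1}$ almost surely (though only the upper bound is needed for \eqref{eq:wildsharpdesired} and only some positive lower bound is needed for wildness).

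For the upper bound, observe that every time a new partner is chosen for a processed individual, the corresponding shift of the running maximum is a geometric random variable whose parameter is $pq^{m}$, where $m\leq 2(i+1)$ is the number of previously matched individuals of the opposite gender.  Each such shift has expectation at most $q^{-2(i+1)}/p$, and summing gives $\mathbb E[H_i] = O(q^{-2i})$.  Markov's inequality combined with the Borel-Cantelli lemma then shows that, for every $\varepsilon>0$, $H_i\leq q^{-(2+\varepsilon)i}$ eventually almost surely, yielding $\limsup \log H_i/i \leq 2\log q^{-1}$ and hence \eqref{eq:wildsharpdesired}.

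For wildness, a simple counting argument shows $m\geq i$ after stage $i$ (every processed male of level less than $i$ is already matched), so that the geometric shift $G_i$ at stage $i$ has parameter $\pi_i \leq pq^i$ conditional on the history.  Since $\P\bigl(G_i \geq \lfloor 1/(4\pi_i)\rfloor\mid \mathcal F_{i-1}\bigr) = (1-\pi_i)^{\lfloor 1/(4\pi_i)\rfloor} \geq e^{-1/2}$ for all sufficiently large $i$, a conditional Borel-Cantelli argument (Lévy's extension) yields that a positive asymptotic density of stages $j$ have $G_j \geq q^{-j}/(4p)$.  The most recent such stage $j\leq i$ therefore satisfies $j \geq i - O(\log i)$ eventually, and by monotonicity of $H_i$ we get $H_i \geq q^{-j}/(4p) \geq q^{-i}/i^{O(1)}$.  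Hence $\liminf \log H_i/i \geq \log q^{-1} > 0$, completing the proof that $\sigma$ is wild.

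The main obstacle is this lower bound step: the increments $G_j$ have heavy left tails and genuinely depend on the prior history (through $m$), so the usual independent Borel-Cantelli lemma does not apply directly, and one must combine a conditional Borel-Cantelli argument with the deterministic lower bound $m\geq i$ to guarantee that ``big jumps'' of size $\geq q^{-j}/(4p)$ occur with positive density, which then control the gaps between them.
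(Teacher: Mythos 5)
Your construction and your bound on $\limsup_i \log H_i/i$ are essentially the paper's: the paper uses the same greedy scheme (alternating genders, always taking the least attractive eligible partner above the running maximum $H$), observes that each increment of $H$ is conditionally geometric with parameter $pq^{N}$ where $N$ is the number of positive-index matched individuals of the relevant gender, sums expectations to get $\E[H_n]=q^{-n+o(n)}$ per individual processed (i.e.\ $q^{-2i+o(i)}$ per level), and applies Markov and Borel--Cantelli. Where you genuinely diverge is the proof of wildness. The paper only shows that $H$ makes jumps of size $p^{-1}q^{-\lfloor n/2\rfloor}$ infinitely often, concludes that $\sigma$ is not tame, and then invokes the dichotomy of \cref{thm:tameorwild} to upgrade ``not tame'' to ``wild''. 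You instead prove the required $\liminf$ directly, by showing via L\'evy's extension of the Borel--Cantelli lemma that jumps of size $q^{-j}/(4p)$ occur at a positive density of stages. This is more self-contained -- it does not lean on the tame/wild dichotomy, whose proof is the technical heart of the paper -- at the cost of a somewhat longer argument. Two small corrections to your write-up: positive density of big jumps gives only that the last big jump before stage $i$ occurs at some $j\geq ci$ with $c>0$ (if $S_i$ counts big jumps up to stage $i$ and the last one is at $j$, then $j\geq S_j=S_i\geq ci(1-o(1))$), not $j\geq i-O(\log i)$, so what you actually obtain is $\liminf_i \log H_i/i\geq c\log q^{-1}$ for some $c\in(0,1)$ rather than $\geq\log q^{-1}$; and your opening remark that the task reduces to showing $\log H_i/i\to 2\log q^{-1}$ is neither needed nor established by your bounds. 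Neither issue affects the proof, since \eqref{eq:wildsharpdesired} needs only the $\limsup$ bound and wildness needs only a positive $\liminf$, as you yourself note. Your sandwich $H_i-i\leq M\bigl(\sigma,i+\tfrac12\bigr)\leq H_i$ is correct and is a useful explicit form of a comparison the paper leaves implicit.
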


\begin{proof}
  Let $(x_i)_{i\geq 1}$ be the enumeration of $[1,\infty) \times \{\male,\female\}$ given by $x_{2i-1}=(i,\male)$ and $x_{2i}=(i,\female)$ for every $i\geq 1$.
  As in the proof of \cref{thm:wildexistence}, we define a sequence of partial matchings $(\sigma_n)_{n\geq0}$ as follows.
  Let $\sigma_{0}$ be the unique stable perfect matching of $K_{(-\infty,0],(-\infty,0]}$.
  Having defined $\sigma_{n}$, if $x_{n+1}$ is matched in $\sigma_{n}$ then $\sigma_{n+1} = \sigma_{n}$.
  If $x_{n+1}$ is not matched in $\sigma_{n}$,  let $\sigma_{n+1}$ be obtained from $\sigma_n$ by matching $x_n$ to the least attractive member of the opposite gender that is compatible with $x_{n+1}$,  is strictly more attractive than every individual (of either gender) that is already matched in $\sigma_n$, and is incompatible with every individual of the same gender as $x_{n+1}$ that has positive index and is already matched in $\sigma_n$.
  As before, we obtain a locally finite stable perfect matching $\sigma$ of $K_{\Z,\Z}(p)$ by taking the limit as $n\to\infty$.

  We wish to verify that this matching $\sigma$ satisfies \eqref{eq:wildsharpdesired}. For each $n\geq 0$, let $H_n$ be the index of the most attractive individual (of either gender) that is already matched in $\sigma_n$. For each $n\geq 1$, let $N_n$ be the number of individuals of positive index that are of the same gender as $x_n$ and already matched in $\sigma_{n-1}$, and note that $\lfloor (n-1) /2\rfloor \leq N_n \leq n-1$. Then, conditional on $\sigma_n$, we either have that $x_{n+1}$ is already matched in $\sigma_n$ in which case  $H_{n+1}-H_n=0$, or else $H_{n+1}-H_n-1$ is conditionally distributed as a geometric random variable with success probability $pq^n \leq p q^{N_{n+1}} \leq p q^{\lfloor n/2\rfloor}$. In particular, it follows that $H_{n+1}-H_{n}\geq p^{-1}q^{-\lfloor n/2\rfloor}$ for infinitely many $n$ almost surely, which clearly implies that $\sigma$ is not tame and is therefore wild almost surely by \cref{thm:tameorwild}. On the other hand, it also follows that
\[\E[H_n]\leq \sum_{i=0}^{n-1} \Bigl(1+\frac{1}{pq^{i}}\Bigr)=q^{-n + o(n)},\]
and it follows by an easy application of Markov's inequality and Borel-Cantelli that
\[\limsup_{n\to\infty} \frac{1}{n}\log H_n \leq \log q^{-1}\]
almost surely. This immediately implies the claim. 
\end{proof}

\subsection*{Acknowledgments}

This work was carried out while OA was visiting and TH was an intern at Microsoft Research, Redmond.
OA is supported in part by NSERC.
TH was also supported by a Microsoft Research PhD Fellowship.

\addcontentsline{toc}{chapter}{Bibliography}

\bibliographystyle{habbrv}
\bibliography{matching}

\end{document}